\numberwithin{equation}{section}
\newtheorem{tm}{Theorem}[section]
\newtheorem{rk}{Remark}[section]
\newtheorem{prop}{Proposition}[section]
\newtheorem{lm}{Lemma}[section]
\newcommand{\ee}{\mathbb E}
\newcommand{\pp}{\mathbb P}
\newcommand{\nn}{\mathbb N}
\newcommand{\hh}{\mathbb H}
\newcommand{\rr}{\mathbb R}
\newcommand{\OO}{\mathcal O}
\newcommand{\EE}{\mathcal E}
\newcommand{\FFF}{\mathscr F}
\newcommand{\<}{\langle}
\renewcommand{\>}{\rangle}
\begin{document}

\title[Approximating SEEs with White and Rough Noises]
{Approximating Stochastic Evolution Equations with Additive White and Rough Noises}

\author{Yanzhao Cao}
\address{Department of Mathematics and Statistics, Auburn University, Auburn, AL 36849}
\email{yzc0009@auburn.edu}
\thanks{}
\author{Jialin Hong}
\address{Academy of Mathematics and Systems Science, Chinese Academy of Sciences, Beijing, China}
\curraddr{}
\email{hjl@lsec.cc.ac.cn}
\thanks{}
\author{Zhihui Liu}
\address{Academy of Mathematics and Systems Science, Chinese Academy of Sciences, Beijing, China}
\curraddr{}
\email{liuzhihui@lsec.cc.ac.cn (Corresponding author)}
\thanks{}

\subjclass[2010]{60H15, 60H35, 65C30, 65M60}

\keywords{parabolic/hyperbolic stochastic partial differential equation, fractional Brownian motion, Hurst index $H\leq \frac{1}{2}$, Wong-Zakai approximation, Galerkin approximation}

\date{\today}

\dedicatory{}
\maketitle

\begin{abstract}
In this paper, we analyze Galerkin approximations  for  stochastic evolution equations driven by an additive Gaussian noise which is temporally white and spatially fractional with Hurst index less than or equal to $1/2$. First we  regularize the noise by the Wong-Zakai approximation and obtain its optimal order of convergence. 
Then we apply the Galerkin method  to discretize the stochastic evolution equations with  regularized noises. 
Optimal error estimates are obtained for the Galerkin approximations. In particular,  our error estimates remove an infinitesimal factor which appears in the error estimates of various numerical methods for stochastic evolution equations in existing literatures. 
\end{abstract}

\section{Introduction}
\label{sec1}

In this paper we consider the Galerkin approximation of the stochastic evolution equation (SEE)
\begin{align}\label{see}
L u(t,x)=b(u(t,x))+\xi(t,x),\quad (t,x)\in I\times \OO, 
\end{align}
with either homogenous Dirichlet boundary condition
\begin{align}\label{dbc}
u(t,0)=u(t,1)=0,\quad t\in I 
\end{align}
or Neumann boundary condition
\begin{align}\label{nbc}
\partial_x u(t,0)=\partial_x u(t,1)=0,\quad t\in I, 
\end{align}
where  $I=[0,T]$ and $\OO=(0,1)$. Here $L$ is a second order partial differential operator, the shift coefficient $b$ is a real-valued Lipschitz continuous function, and $\xi$ is a white-fractional noise, i.e., $\xi=\frac{\partial^2W}{\partial t \partial x}$  where $W=\{W(t,x),\ (t,x)\in I\times \OO\}$ is a fractional Gaussian sheet on a stochastic basis  $(\Omega,\FFF,(\FFF_t)_{t\in I},\pp)$ such that
\begin{align}\label{wfn}
\ee\bigg[W(s,x)W(t,y)\bigg]
=(s\wedge t)\frac{x^{2H}+y^{2H}-|x-y|^{2H}}{2}
\end{align}
for all $(s,x),(t,y)\in I\times \OO$.
Here the parameter $H$ is called the Hurst index. When $H=1/2$, 
the white-fractional noise $\xi$ becomes the standard space-time white noise.  

There have been many studies on numerical approximations of SEEs with space-time white noise or smoother noises (cf. \cite{CP12, FLP14, Hau03, JR15, KLM11, Kru14, WGT14, ZTRK14} and references therein).  In this paper we focus on the case when $H<1/2$, which makes the noise ``rougher" than the white noise  in the spatial dimension. 
In some practical applications such as flows in porous media, such rough noises are more suitable to  model physical properties (cf. \cite{CCCSV13} and references therein).

Though the methodology developed in this paper is applicable to a variety of partial differential operators,  in this study, we focus on  the parabolic partial differential operator $L=L^I=\partial_t-\partial_{xx}$,  which makes Eq. \eqref{see} a stochastic heat equation (SHE),  and the hyperbolic partial differential operator $L=L^{II}=\partial_{tt}-\partial_{xx}$,  which makes Eq. \eqref{see} a stochastic wave equation (SWE). For the SHE we impose the initial condition $u(0,x)=u_0(x)$;  for the  SWE we the impose initial conditions $u(0,x)=u_0(x)$ and $\partial_t u(0,x)=v_0(x)$. 

A key step of our Galerkin approximation for Eq. \eqref{see}  is to apply the approximation
\begin{align}\label{eq:wz}
\tilde \xi(t,x)=\sum_{i=0}^{m-1}\sum_{j=0}^{n-1}
\bigg[\frac{1}{kh}\int_{I_i}\int_{\OO_j}\xi(ds,dy)\bigg]
\chi_{i,j}(t,x),\  (t,x)\in I\times \OO, 
\end{align}
to regularize the white-factional noise $\xi(t,x)$.  
Here $\{I_i\}_{i=0}^{m-1}$ and $\{\OO_j\}_{j=0}^{n-1}$ are uniform  partitions of $I$ and $\OO$, respectively, with grid sizes $k=T/m$ and $h=1/n$,
and $\chi$ is the usual indicator function on these partitions.  
The spatial partition will also serve as the finite element mesh in the Galerkin approximation of  the SHE. We note that $\tilde \xi$ is a piecewise constant process in each time-space grid $I_i\times \OO_j$. In this sense \eqref{eq:wz} is a  Wong-Zakai type approximation for the stochastic process  $\xi(t,x)$ (see \cite{WZ65}  for the origin of the Wong-Zakai approximation). For simplicity, we call \eqref{eq:wz} the Wong-Zakai approximation of $\xi$.   The Wong-Zakai approximation is a commonly used method in the numerical study of stochastic differential equations (cf. \cite{MT04} and references therein).  It has also been widely used in theoretical analysis as well as  numerical approximations of SPDEs.  For instance, based on the Wong-Zakai approximation for the space-time white noise, the authors in \cite{BMS95} obtained a support theorem for the law of the solution of SHEs. For numerical solutions  using the Wong-Zakai approximation, we refer to  \cite{CYY07, CHL15b, CHL15a} for the finite element method  for stochastic elliptic equations and  \cite{ANZ98, CY07, DZ02}  for SEEs.


%

To obtain error estimates for the Galerkin approximation of Eq. \eqref{see}, we first  study the well-posedness and Sobolev regularity of its mild solution (defined in Section \ref{sec2}). Specifically, we  prove that if the initial datum  possesses finite $p$-th moment for $p\geq 2$, then Eq. \eqref{see} has a unique mild solution with  uniformly bounded $p$-th moment.  Moreover,  this mild solution is in $\dot{\hh}^\beta$ (whose definition is given in Section \ref{sec2}) provided $u_0$ is in $\dot{\hh}^\beta$ and/or $v_0$ is in $\dot{\hh}^{\beta-1}$ for any $\beta\in [0,H)$ (see Theorem \ref{wel}). 

The main results of this study are the error estimates for both the approximation to the exact solution of  the SEE through the Wong-Zakai approximation and the numerical  solutions  through Galerkin finite element approximation for the SHE and the spectral Galerkin  approximation for the SWE.  
Let   $u$ be the mild solution of Eq. \eqref{see} and $ \tilde u$ be the mild solution of the SEE with the noise term replaced by the   Wong-Zakai approximation. 
Then  (see Theorem \ref{uu}) for SHE, 
\begin{align}\label{eq:error-W-Z-SHE}
\sup_{t\in I}\bigg(\ee\bigg[ \|u (t)-\tilde u(t)\|_{\mathbb L^2}^p\bigg] \bigg)^\frac1p
\le C \left(h^H+k^\frac14 h^{H-\frac12}\right), 
\end{align}
and for SWE, 
\begin{align}\label{eq:error-W-Z-SWE}
\sup_{t\in I}\bigg(\ee\bigg[ \|u (t)-\tilde u(t)\|_{\mathbb L^2}^p\bigg] \bigg)^\frac1p
\le C \left(h^H+k^\frac12 h^{H-\frac12}\right).
\end{align}
Here and in the rest of the paper, $C$ denotes a generic constant whose value may be different at different appearances. 
A similar result for the SHE driven by the space-time white noise ($H=1/2$) was obtained  in \cite{ANZ98}. There 
the error estimate is O($  k^\frac14+h k^{-\frac14})$. Obviously this error estimate coincides with ours, but only after an additional condition on $k$ and $h$ is enforced. For the  Galerkin finite element approximation $ \tilde u _h$ for the  SHE we have the following error estimate (see Theorem \ref{fem-ord})
\begin{align}\label{eq:error-SHE}
\sup_{t\in I}\bigg(\ee\bigg[\| \tilde u (t)- \tilde u _h(t)\|_{\mathbb L^2}^p\bigg]\bigg)^\frac1p
\le C (h^H+ h^{\frac{3}{2}-\epsilon}k^{-\frac{1}{2}}), 
\end{align}
and for the spectral Galerkin approximation $ \tilde u _N$ for the SWE we have  (see Theorem \ref{spe-ord0})
\begin{align}\label{eq:error-SWE}
\sup_{t\in I}\bigg(\ee\bigg[\| \tilde u (t)- \tilde u _N(t)\|_{\mathbb L^2}^p\bigg]\bigg)^\frac1p
\le C N^{-1} h^{H-1}
\end{align}
where $N$ is the number of terms in the spectral approximation.  We notice that our error estimates  remove a negative infinitesimal component which appears in many error estimates for numerical solutions of  SPDES (see \cite{ANZ98, CY07, DZ02, Yan05}).

The paper is organized as follows. 
In Section \ref{sec2}, we provide some preliminaries about stochastic integrals with respect to white-fractional noise with $H\le 1/2$, followed by establishing the  existence and uniqueness as well as the Sobolev regularity of the mild solution.  In Section \ref{sec3}, we study the regularization of the noise with Wong-Zakai approximation and derive the error estimates \eqref{eq:error-W-Z-SHE} and \eqref{eq:error-W-Z-SWE}.  Finally in Section \ref{sec4}, we apply the Galerkin approximation  to spatially discretize  the regularized SEEs and derive the error estimates  \eqref{eq:error-SHE} and  \eqref{eq:error-SWE}.

\section{Well-posedess and regularity of the SEEs}
\label{sec2}

 In this section, we first introduce the stochastic integral with respect to the white-factional noise $\xi$ with deterministic functions as integrands.
Then we use such  integrals to define the mild solution and establish the well-posedness and Sobolev regularity of Eq. \eqref{see}.

\subsection{Stochastic integrals with respect to the white-fractional noise}  
We follow the approach of \cite{CHL15a} to define the stochastic integrals with respect to the white-fractional noise $\xi$. 
To this end, we introduce a set $\EE$ of all  step functions in $I\times \overline \OO$  in the form of 
\begin{align*}
f(t,x)=\sum_{i=0}^{M-1}\sum_{j=0}^{N-1} f_{ij}\chi_{(a_i, a_{i+1}]\times (b_j, b_{j+1}]}(t,x),\quad (t,x)\in I\times \overline \OO,
\end{align*}
where $0=a_0<a_1<\cdots<a_M=T$ and $0=b_0<b_1<\cdots<b_N=1$ are partitions of $I$ and $\OO$, respectively, and $f_{ij}\in\rr$, $i=0,1,\cdots,M-1$, $i=0,1,\cdots,N-1$, $M,N\in \nn_+$. 
For $f\in \EE$, we define its integral with respect to $W$ by the Riemann sum as
\begin{align*}
&\int_I \int_{\OO} f(s,y) \xi(ds,dy) \\
&:=\sum_{i=0}^{M-1}\sum_{j=0}^{N-1} f_{ij}
\bigg(W(a_{i+1})-W(a_i)\bigg) \bigg(W(b_{j+1})-W(b_j)\bigg),
\end{align*}
and for $f,g\in \EE$, we define their scalar product as
\begin{align*}
\Psi(f,g):=\ee\bigg[\bigg(\int_I\int_{\OO} f(s,y) \xi(ds,dy)\bigg) \bigg(\int_I\int_{\OO}g(s,y) \xi(ds,dy)\bigg)\bigg].
\end{align*}

Next we extend $\EE$ through completion to a Hilbert space, denoted by $\mathcal H$,  and define the  stochastic integral for 
any function $f\in \mathcal H$ accordingly. 
By \cite[Theorem 2.9]{BJQ15},
we have a characterization for $\mathcal H$: 
\begin{align*}
\mathcal H
&=\bigg\{f\ \text{is Lebesgue measurable}:\ \int_I\int_{\OO}\int_{\OO} 
\frac{\left|f(s,x)-f(s,y)\right|^2}{|x-y|^{2-2H}}dxdyds    \nonumber   \\
&\qquad\qquad+\int_I\int_{\OO} f^2(s,x) \bigg( x^{2H-1}+(1-x)^{2H-1} \bigg)dxds<\infty \bigg\}
\end{align*}
and the following It\^{o} isometry (cf. \cite{BJQ15, CHL15a}).
\\

\begin{tm}\label{ito}
For all $f, g \in \mathcal H$,
\begin{align}\label{ito0}
&\ee\bigg[\bigg(\int_I\int_{\OO} f(s,y) \xi(ds,dy)\bigg) 
\bigg(\int_I\int_{\OO}g(s,y) \xi(ds,dy)\bigg)\bigg]    \\
&=\frac{H(1-2H)}{2} \int_I\int_{\OO}\int_{\OO} 
\frac{\big(f(s,x)-f(s,y)\big) \big(g(s,x)-g(s,y)\big)}{|x-y|^{2-2H}}dxdyds    \nonumber   \\
&\quad+H \int_I\int_{\OO} f(s,x)g(s,x)
\bigg( x^{2H-1}+(1-x)^{2H-1} \bigg) dxds.\nonumber 
\end{align}
\end{tm}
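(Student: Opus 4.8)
The plan is to verify \eqref{ito0} by direct computation for step functions $f,g\in\EE$ and then to pass to $\mathcal H$ by density. By definition $\mathcal H$ is the completion of $\EE$ under the norm $f\mapsto\Psi(f,f)^{1/2}$, so once \eqref{ito0} is established on $\EE$ it shows in particular that this norm coincides on $\EE$ with the norm appearing in the characterization of $\mathcal H$ quoted above; approximating arbitrary $f,g\in\mathcal H$ by step functions in the $\Psi$-norm and passing to the limit on both sides then yields the identity in general. By polarization it suffices to treat the case $f=g$.

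So fix $f,g\in\EE$ subordinate to a common grid $\{a_i\}\times\{b_j\}$, with values $f_{ij}$, $g_{ij}$, and write the rectangular increment
\[
\Delta_{ij}W=W(a_{i+1},b_{j+1})-W(a_{i+1},b_j)-W(a_i,b_{j+1})+W(a_i,b_j).
\]
From the definition of the integral, $\Psi(f,g)=\sum_{i,j,k,l}f_{ij}g_{kl}\,\ee[\Delta_{ij}W\,\Delta_{kl}W]$, and I would evaluate $\ee[\Delta_{ij}W\,\Delta_{kl}W]$ by inserting the covariance \eqref{wfn}. The decisive structural feature is that \eqref{wfn} factorizes as $(s\wedge t)\,R_H(x,y)$ with $R_H(x,y)=\tfrac12(x^{2H}+y^{2H}-|x-y|^{2H})$; consequently the expectation splits into a temporal mixed second difference of $s\wedge t$ times a spatial mixed second difference of $R_H$. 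The temporal factor equals the length of the overlap of $[a_i,a_{i+1}]$ and $[a_k,a_{k+1}]$, which on the common grid collapses to $(a_{i+1}-a_i)\,\delta_{ik}$ — this is precisely the white-in-time property. Summing over the time indices leaves
\[
\Psi(f,g)=\sum_i(a_{i+1}-a_i)\sum_{j,l}f_{ij}g_{il}\,\Delta^{(2)}_{jl}R_H,
\]
where $\Delta^{(2)}_{jl}R_H$ denotes the mixed second difference of $R_H$ over $[b_j,b_{j+1}]\times[b_l,b_{l+1}]$.

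The heart of the matter is then the purely spatial identity: for step functions $\phi,\psi$ on $\OO$,
\begin{align*}
\sum_{j,l}\phi_j\psi_l\,\Delta^{(2)}_{jl}R_H
&=\frac{H(1-2H)}{2}\int_\OO\int_\OO\frac{(\phi(x)-\phi(y))(\psi(x)-\psi(y))}{|x-y|^{2-2H}}\,dx\,dy\\
&\quad+H\int_\OO\phi(x)\psi(x)\big(x^{2H-1}+(1-x)^{2H-1}\big)\,dx.
\end{align*}
Off the diagonal one computes $\partial_x\partial_y R_H(x,y)=-H(1-2H)|x-y|^{2H-2}$, and formally $\int_0^1\partial_x\partial_y R_H(x,y)\,dy=\partial_x R_H(x,1)=H(x^{2H-1}+(1-x)^{2H-1})$ produces the boundary term, while the symmetrization $\int\int\phi(x)\psi(y)K=\int\int\phi\psi\,K-\tfrac12\int\int(\phi(x)-\phi(y))(\psi(x)-\psi(y))K$ converts the (negative, non-integrable) off-diagonal density $K=\partial_x\partial_y R_H$ into the convergent double-difference form with the stated constant. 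Making this rigorous is the main obstacle: because $H<\tfrac12$ the density $|x-y|^{2H-2}$ is not integrable across the diagonal, so the naive fundamental-theorem-of-calculus step is invalid and one must account for a singular contribution concentrated on $\{x=y\}$. The cleanest rigorous route is to verify the identity directly on the building blocks $\phi=\chi_{(0,b)}$, $\psi=\chi_{(0,c)}$ — where the left-hand side telescopes to $R_H(b,c)$ and the right-hand side can be evaluated in closed form — and then to extend to all step functions by bilinearity. This singular spatial computation is exactly the content of the characterization of $\mathcal H$ established in \cite[Theorem 2.9]{BJQ15} together with \cite{CHL15a}, which I would invoke to close the argument.

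Combining the temporal reduction with the spatial identity gives \eqref{ito0} for all $f,g\in\EE$, and the coincidence of the two norms on the dense subspace $\EE$ then extends it to $f,g\in\mathcal H$. Throughout, the only genuinely delicate point is the diagonal singularity for $H<\tfrac12$: in contrast to the classical regime $H\ge\tfrac12$, the off-diagonal integral diverges on its own, and only the symmetrized difference form together with the boundary term is meaningful, so every manipulation has to be performed at the level of step-function (equivalently, regularized) approximations before passing to the limit.
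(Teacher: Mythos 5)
The paper itself offers no proof of this theorem: it simply quotes the isometry from \cite[Theorem 2.9]{BJQ15} and \cite{CHL15a}, so there is no internal argument to compare yours against line by line. Your proposal is correct in structure and is in fact more informative than the paper's treatment. The reduction is sound: on a common grid the covariance \eqref{wfn} factorizes, the temporal mixed second difference of $s\wedge t$ equals the overlap length $(a_{i+1}-a_i)\delta_{ik}$ (the white-in-time property), and this correctly isolates the purely spatial identity for mixed second differences of $R_H(x,y)=\tfrac12\big(x^{2H}+y^{2H}-|x-y|^{2H}\big)$. Your formal computations $\partial_x\partial_y R_H=-H(1-2H)|x-y|^{2H-2}$ off the diagonal and $\partial_x R_H(x,1)=H\big(x^{2H-1}+(1-x)^{2H-1}\big)$ identify both terms of \eqref{ito0} with the right constants, and the telescoping observation for indicators (the left-hand side collapses to $R_H(b,c)$ because $R_H(\cdot,0)=0$) is the right rigorous entry point. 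What you do not do is carry out the closed-form evaluation of the right-hand side at $\phi=\chi_{(0,b]}$, $\psi=\chi_{(0,c]}$ --- the one genuinely singular computation --- choosing instead to invoke \cite{BJQ15, CHL15a} to close the argument; the same applies to your density step, since identifying the abstract completion $\mathcal H$ with the displayed function space (so that the right-hand side is even defined for general $f,g\in\mathcal H$ and limits can be passed) is itself part of \cite[Theorem 2.9]{BJQ15}. Since these are exactly the references the paper cites for the whole theorem, your argument is no less complete than the paper's; just be clear that what you have produced is a correct reduction of the theorem to the cited spatial result, not a self-contained proof.
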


We remark that in the case of space-time white noise, i.e., $H=1/2$, \eqref{ito0} becomes the It\^o isometry for space-time white noise:
\begin{align*}
&\ee\bigg[\bigg(\int_I\int_{\OO} f(s,y) \xi(ds,dy)\bigg) 
\bigg(\int_I\int_{\OO}g(s,y) \xi(ds,dy)\bigg)\bigg] \\
&\qquad \qquad \qquad =\int_I\int_{\OO} f(s,y)g(s,y)dyds.
\end{align*}

We will frequently use the following inequalities about the Lebesgue integrals of the singular kernels associated with the fractional Brownian motion (see \cite{CHL15a}, Lemma 2.2 and Lemma 2.3):
\begin{align}
\sum_{j=0}^{n-1}\int_{\OO_j}\int_{\OO_j}|y-z|^{2H-1}dydz
&\le C h^{2H}, \label{same}  \\
\sum_{j\neq l}\int_{\OO_j}\int_{\OO_l}|y-z|^{2H-2}dydz
&\le C h^{2H-1}. \label{dif}
\end{align}

\subsection{Well-posedness and Sobolev regularity}

In this subsection, we study the existence and unique of the mild solution and its regularity for Eq. \eqref{see}. 
The mild solution is defined by the Green's function for the given partial differential operator $L$ which we define as follow. 
Let $\{(\lambda_\alpha,\  \varphi_\alpha)\}_{\alpha\in \nn_+}$, be the eigensystem of the negative Laplacian $-\Delta$. 
Under Dirichlet condition \eqref{dbc}, it is given by
\begin{align*}
\lambda_\alpha:=(\alpha\pi)^2,\quad 
\varphi_\alpha(x):=\sqrt{2}\sin(\sqrt{\lambda_\alpha} x),
\quad  x\in \overline{\OO}, \quad \alpha\in \nn_+. \
\end{align*}
In this paper, all discussions are concerned with the Dirichlet condition \eqref{dbc}. 
However, the main results are also valid for the Neumann condition \eqref{nbc} since the main estimates about $\lambda_\alpha$ in Lemma \ref{lm-re} also hold for $\psi_\alpha(\cdot):=\sqrt{2}\cos(\sqrt{\lambda_\alpha}\cdot)$, $\alpha\in \nn_+$, which are eigenfunctions corresponding to the 
eigenvalues  $\lambda_\alpha$ of  $-\Delta$ with the Neumann boundary condition.

With the above eigensystem, the Green's function for $L$ can be represented as  (cf. \cite{Duf15})
\begin{align}\label{gre}
G_t(x,y)
&=\sum_{\alpha=1}^\infty \phi_\alpha(t)\varphi_\alpha(x)\varphi_\alpha(y),\quad t\in I,\ x,y \in \overline{\OO},
\end{align}
where $\phi_\alpha(t)=e^{-\lambda_\alpha t}$ for SHE and $\phi_\alpha(t)=\frac{\sin(\sqrt{\lambda_\alpha} t)}{\sqrt{\lambda_\alpha}}$ for SWE, $t\in I$.
For convenence, we set $\phi_\alpha(t-s)=0$, and thus $G_{t-s}(x,y)=0$, for any $0\le t<s\le T$ and $x,y\in \overline{\OO}$.

Denote by $S$ the stochastic convolution
\begin{align}\label{con}
S (t,x):=\int_0^t\int_{\OO} G_{t-s}(x,y)\xi(ds,dy),\quad (t,x)\in I\times \overline{\OO}.
\end{align}
Then the mild solution $u$ of Eq. \eqref{see} is defined as the solution of the following stochastic integral equation (cf. \cite{Dal09}):
\begin{align}\label{mild}
u(t,x)
=\omega(t,x)+\int_0^t\int_{\OO} G_{t-s}(x,y)b(u(s,y))dsdy+S (t,x)
\end{align}
for all $(t,x)\in I\times \overline{\OO}$,
where  $\omega$ is the solution of the deterministic evolution equation $Lu=0$ with the same initial and boundary conditions, i.e., for SHE,
\begin{align*}
\omega(t,x)=\int_{\OO} G_t(x,y) u_0(y) dy,
\quad (t,x)\in I\times \overline{\OO},
\end{align*}
and for SWE,
\begin{align*}
\omega(t,x)=\int_{\OO} G_t(x,y) v_0(y) dy
+\int_{\OO} \frac{\partial G_t(x,y)}{\partial t} u_0(y) dy,
\quad (t,x)\in I\times \overline{\OO}.
\end{align*}

The following lemma will be frequently used in the derivation of the regularity for the mild solution and in the error estimates of the  Wong-Zakai approximation. \\

\begin{lm}\label{lm-re}
\begin{enumerate}
\item  [{\rm (i).}]
For any $y,z\in \rr$, there exists a  constant $C$ such that 
\begin{align}\label{sin-sum}
\begin{split}
\sum_{k=1}^\infty \frac{|\varphi_\alpha(y)-\varphi_\alpha(z)|^2}{\lambda_\alpha}
&\le C|y-z|,\\
\sum_{k=1}^\infty \frac{|\psi_\alpha(y)-\psi_\alpha(z)|^2}{\lambda_\alpha}
&\le C|y-z|.
\end{split}
\end{align}
Moreover, for any $\kappa\in (1/2,3/2)$, there exists a  constant $C=C(\kappa)$ such that 
\begin{align}\label{sin-sum1}
\begin{split}
\sum_{k=1}^\infty \frac{|\varphi_\alpha(y)-\varphi_\alpha(z)|^2}{\lambda_\alpha^\kappa}
&\le C|y-z|^{2\kappa-1},\\
\sum_{k=1}^\infty \frac{|\psi_\alpha(y)-\psi_\alpha(z)|^2}{\lambda_\alpha^\kappa}
&\le C|y-z|^{2\kappa-1}.
\end{split}
\end{align}

\item  [{\rm (ii).}]
For any $H<1/2$ and any $\alpha\in \nn_+$, there exists a  constant $C=C(H)$ such that 
\begin{align}\label{sob-int}
\int_{\OO}\int_{\OO} \frac{|\varphi_\alpha(y)-\varphi_\alpha(z)|^2}
{|y-z|^{2-2H}} dydz
\le C \lambda_\alpha^{\frac{1}{2}-H}.
\end{align}

\item [{\rm (iii).}]
For any $t>0$ and any $y,z\in \rr$, there exists a  constant $C=C(T)$ such that 
\begin{align} \label{phi}
\sum_{\alpha=1}^\infty |\varphi_\alpha(y)-\varphi_\alpha(z)|^2 
\bigg( \int_0^t \phi_\alpha^2(t-s)ds \bigg)
\le C |y-z|.
\end{align}

\item  [{\rm (iv).}]
For any $y,z\in \overline{\OO}$, there exists a  constant $C=C(T)$ such that 
\begin{align}\label{gg}
\int_I \int_{\OO}|G_{t-s}(x,y)-G_{t-s}(x,z)|^2 dxds
\le C |y-z|.
\end{align}
\end{enumerate}
\end{lm}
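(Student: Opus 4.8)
The plan is to reduce all four estimates to elementary trigonometric sums and integrals by exploiting $\sqrt{\lambda_\alpha}=\alpha\pi$ and $\varphi_\alpha(x)=\sqrt2\sin(\alpha\pi x)$ (respectively $\psi_\alpha(x)=\sqrt2\cos(\alpha\pi x)$). The logical backbone is that (iv) follows from (iii), which follows from (i), while the genuinely new work lies in (i) and its continuous analogue (ii). Both of the latter are handled by the same device: splitting a sum or integral at the threshold where $\alpha\asymp|y-z|^{-1}$ and using $\sin^2\theta\le\min(\theta^2,1)$.

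For (i) I would start from the product-to-sum identity $\varphi_\alpha(y)-\varphi_\alpha(z)=2\sqrt2\cos(\alpha\pi\tfrac{y+z}2)\sin(\alpha\pi\tfrac{y-z}2)$, so that $|\varphi_\alpha(y)-\varphi_\alpha(z)|^2\le 8\sin^2(\alpha\pi\tfrac{y-z}2)$, with the identical bound for $\psi_\alpha$ obtained from the $\cos A-\cos B$ formula. Writing $r=|y-z|$ and splitting $\sum_\alpha\alpha^{-2}\sin^2(\alpha\pi r/2)$ at $\alpha\approx 1/r$, the low modes contribute $\lesssim r^2\sum_{\alpha\le 1/r}1\lesssim r$ and the high modes contribute $\lesssim\sum_{\alpha>1/r}\alpha^{-2}\lesssim r$, which gives the first inequality (the case $r\ge1$ being trivial as the sum is uniformly bounded). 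For the weighted version one replaces $\alpha^{-2}$ by $\alpha^{-2\kappa}$: the same split yields $r^2\sum_{\alpha\le 1/r}\alpha^{2-2\kappa}\lesssim r^2\cdot r^{-(3-2\kappa)}=r^{2\kappa-1}$ and $\sum_{\alpha>1/r}\alpha^{-2\kappa}\lesssim r^{2\kappa-1}$, where convergence of the two partial sums is exactly what forces $\kappa<3/2$ and $\kappa>1/2$ respectively and accounts for the $\kappa$-dependence of $C$.

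For (ii) I would use $|\varphi_\alpha(y)-\varphi_\alpha(z)|^2\le 2\min((\alpha\pi)^2|y-z|^2,4)$, reduce the double integral over $\OO\times\OO$ to $\int_0^1 r^{2H-2}\min((\alpha\pi)^2r^2,4)\,dr$ (since the push-forward of Lebesgue measure under $(y,z)\mapsto|y-z|$ has density $2-2r\le 2$), and split at $r_0=2/(\alpha\pi)$. The inner part gives $(\alpha\pi)^2\int_0^{r_0}r^{2H}\,dr\asymp(\alpha\pi)^{1-2H}$ and the outer part gives $\int_{r_0}^1 r^{2H-2}\,dr\asymp r_0^{2H-1}\asymp(\alpha\pi)^{1-2H}$; here $H<1/2$ is precisely what makes the outer integral dominated by its lower endpoint. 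Since $(\alpha\pi)^{1-2H}=\lambda_\alpha^{1/2-H}$, this is the claimed bound. Parts (iii) and (iv) are then immediate: for (iii) I bound the temporal factor by $\int_0^t\phi_\alpha^2(t-s)\,ds\le(2\lambda_\alpha)^{-1}$ for the heat kernel and by $\lambda_\alpha^{-1}\int_0^t\sin^2(\sqrt{\lambda_\alpha}(t-s))\,ds\le C(T)\lambda_\alpha^{-1}$ for the wave kernel, and then invoke the first inequality of (i); for (iv) I expand $G_{t-s}(x,y)-G_{t-s}(x,z)=\sum_\alpha\phi_\alpha(t-s)\varphi_\alpha(x)(\varphi_\alpha(y)-\varphi_\alpha(z))$, integrate in $x$ using the orthonormality of $\{\varphi_\alpha\}$ in $\mathbb L^2(\OO)$, and apply Fubini in $s$, which turns the left-hand side into exactly the sum estimated in (iii).

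The main obstacle is obtaining the sharp exponents in the weighted part of (i) and in (ii), where a careless estimate would lose the endpoint behaviour. The crux in each case is the correct placement of the threshold ($\alpha\asymp r^{-1}$, respectively $r\asymp(\alpha\pi)^{-1}$) so that the two pieces balance and reproduce the exact powers $r^{2\kappa-1}$ and $\lambda_\alpha^{1/2-H}$, together with tracking how the constants degenerate as $\kappa\to1/2^+$, $\kappa\to3/2^-$, and $H\to1/2^-$.
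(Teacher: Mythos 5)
Your proposal is correct and follows essentially the same route as the paper: the same two-sided bound $|\varphi_\alpha(y)-\varphi_\alpha(z)|^2\le \min(C,\,C\lambda_\alpha|y-z|^2)$ split at the threshold $\alpha\asymp|y-z|^{-1}$ for (i), the same near-diagonal/off-diagonal splitting for (ii) (the paper rescales to $[0,\sqrt{\lambda_\alpha}]^2$ and cuts at $|u-v|=1$, which is exactly your cut at $r_0\asymp(\alpha\pi)^{-1}$ in the unit square), and the identical reductions of (iii) to (i) via $\int_0^t\phi_\alpha^2(t-s)\,ds\le C(T)\lambda_\alpha^{-1}$ and of (iv) to (iii) via Parseval and Fubini. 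The only differences are presentational (direct sum splitting versus comparison with an integral in (i), radial reduction versus change of variables in (ii)), and your write-up even supplies the details of \eqref{sin-sum1} that the paper dismisses as analogous.
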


\begin{proof} 
(i).
For any $x,y\in \rr$,
\begin{align*}
& \sum_{\alpha=1}^\infty \frac{|\varphi_\alpha(y)-\varphi_\alpha(z)|^2}{\lambda_\alpha} \\
&\le \sum_{\alpha=1}^\infty \frac{8\wedge 2\lambda_\alpha |y-z|^2}{\lambda_\alpha}
 \le \int_0^\infty \frac{8}{\pi^2 u^2}\wedge 2|y-z|^2 du  \nonumber  \\
&\le \int_0^{\frac{2}{\pi |y-z|}} 2|y-z|^2du+ \int_{\frac{2}{\pi |y-z|}}^\infty \frac{8}{\pi^2 u^2}du
=\frac{8|y-z|}{\pi}.
\end{align*}
This proves the first inequality of \eqref{sin-sum}. The proof of the second inequality and  \eqref{sin-sum1} are analogous. 

(ii). Let $u=\alpha\pi y$ and $v=\alpha\pi z$. Then 
\begin{align}\label{eq:inequality(ii)}
&\int_{\OO}\int_{\OO} \frac{|\varphi_\alpha(y)-\varphi_\alpha(z)|^2}
{|y-z|^{2-2H}}dydz \nonumber \\
&=\frac{2}{\lambda_\alpha^H}\int_0^{\sqrt{\lambda_\alpha}}\int_0^{\sqrt{\lambda_\alpha}}\frac{(\sin u-\sin v)^2}{|u-v|^{2-2H}}dudv.
\end{align}
Set $K_1=\{(u,v)\in [0,\sqrt{\lambda_\alpha}]^2: |u-v|\le 1\}$ and $K_2=\{(u,v)\in [0,\sqrt{\lambda_\alpha}]^2: |u-v|> 1\}$.  It is easy to see that 
\begin{align*}
\int_{K_1} \frac{|\sin u-\sin v|^2}{|u-v|^{2-2H}} dudv 
\le \int_{K_1} |u-v|^{2H}dudv
\le 2\sqrt{\lambda_\alpha}-1.
\end{align*}
On the other hand, since $(\sin u-\sin v)^2\le 4$ when $(u,v)\in K_2$, we have that 
\begin{align*}
& \int_{K_2} \frac{|\sin u-\sin v|^2}{|u-v|^{2-2H}} dudv \\
&\le 4\int_0^{\sqrt{\lambda_\alpha}}
\bigg[\int_0^{v-1} (v-u)^{2H-2}du+\int_{v+1}^{\sqrt{\lambda_\alpha}} (u-v)^{2H-2}du\bigg] dv  \\
&=\frac{4}{H(1-2H)}\lambda_\alpha^H.
\end{align*}
From the the above two estimates and \eqref{eq:inequality(ii)} we obtain 
\begin{align*}
\int_{\OO}\int_{\OO} 
\frac{|\varphi_\alpha(y)-\varphi_\alpha(z)|^2}{|y-z|^{2-2H}} dydz
\le 4\lambda_\alpha^{\frac{1}{2}-H}+\frac{8}{H(1-2H)}
\le C \lambda_\alpha^{\frac{1}{2}-H}, 
\end{align*}
which is  \eqref{sob-int}. 

(iii).  Direct calculations yield
\begin{align*}
\int_0^t \phi_\alpha^2(t-s)ds
=\begin{cases}
\frac{1-e^{-2\lambda_\alpha t}}{2\lambda_\alpha},&\quad {\rm for} \  \text{SHE}, \\
\frac{\sqrt{\lambda_\alpha} t-\sin(2\sqrt{\lambda_\alpha}t)}{2\lambda_\alpha^{3/2}},&\quad {\rm for} \ \text{SWE}.
\end{cases}
\end{align*}
It follows from  \eqref{sin-sum} that 
\begin{align*}
&\sum_{\alpha=1}^\infty 
\bigg( \int_0^t \phi_\alpha^2(t-s)ds \bigg)
|\varphi_\alpha(y)-\varphi_\alpha(z)|^2   \\
&\le C \sum_{\alpha=1}^\infty \frac{|\varphi_\alpha(y)-\varphi_\alpha(z)|^2}{\lambda_\alpha}
\le C |y-z|, 
\end{align*}
which proves  \eqref{phi}.

(iv). 
By \eqref{gre} and orthogonality of $\phi_\alpha$, we have
\begin{align*}
&\int_I \int_{\OO} |G_{t-s}(x,y)-G_{t-s}(x,z)|^2 dxds \\
&=\sum_{\alpha=1}^\infty 
\bigg(\int_I \phi_\alpha^2(t-s) ds\bigg) |\varphi_\alpha(y)-\varphi_\alpha(z)|^2.
\end{align*}
Then \eqref{gg} follows immediately from \eqref{phi}. 
\end{proof}

Denote by $\dot \hh^\beta=\dot\hh^\beta (\OO)$  the usual intepolation space with its norm defined by $\|\cdot\|_{\beta}:=\|(-\Delta)^\frac{\beta}{2}\cdot\|_{\mathbb L^2}$, $\beta\in \rr$. 
In particular, $\dot\hh^0=\mathbb L^2$.
We have the following well-posedness and Sobolev regularity of Eq. \eqref{see}.\\

\begin{tm}\label{wel}
Let $p\ge 2$ and $\beta\in [0,H)$.
Assume that $u_0\in \mathbb L^p(\Omega; \dot \hh^\beta)$ and 
$v_0\in \mathbb L^p(\Omega; \dot \hh^{\beta-1})$. 
Then Eq. \eqref{see} associated with Dirichlet condition \eqref{dbc} or Neumann condition \eqref{nbc} with initial data $u_0$ and/or $v_0$ has a unique mild solution $u$ defined by \eqref{mild}. 
Furthermore, there exists a constant $C=C(p,T,H)$ such that for SHE,
\begin{align}\label{bon-she}
\ee\bigg[\sup_{t \in I}\|u(t)\|_{\beta}^p\bigg]
\le C\bigg(1+\ee\bigg[\|u_0\|_{\beta}^p\bigg]\bigg).
\end{align} 
and for SWE,
\begin{align}\label{bon-swe}
\ee\bigg[\sup_{t \in I}\|u(t)\|_{\beta}^p\bigg]
\le C\bigg(1+\ee\bigg[\|u_0\|_{\beta}^p\bigg]
+\ee\bigg[\|v_0\|_{\beta-1}^p\bigg]\bigg).
\end{align} 
\end{tm}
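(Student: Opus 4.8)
The plan is to construct $u$ as the fixed point of the Picard map attached to the mild formulation \eqref{mild} and to read off the regularity bounds \eqref{bon-she}--\eqref{bon-swe} from the estimates produced along the way. Writing $\Phi(u)(t)=\omega(t)+D(u)(t)+S(t)$ with drift convolution $D(u)(t,x)=\int_0^t\int_{\OO}G_{t-s}(x,y)b(u(s,y))\,dyds$ and stochastic convolution $S$ as in \eqref{con}, I would work in the Banach space of adapted processes $u\colon I\to \mathbb L^p(\Omega;\dot\hh^\beta)$ normed by $(\ee[\sup_{t\in I}\|u(t)\|_\beta^p])^{1/p}$, since this is precisely the quantity appearing in \eqref{bon-she}--\eqref{bon-swe}. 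As $S$ does not depend on $u$, the contraction will involve only the drift, so the heart of the matter is the analysis of $S$.

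For the stochastic convolution, expanding $G_{t-s}$ via \eqref{gre} and using orthonormality of $\{\varphi_\alpha\}$ gives $S(t)=\sum_\alpha \hat S_\alpha(t)\varphi_\alpha$ with $\hat S_\alpha(t)=\int_0^t\int_{\OO}\phi_\alpha(t-s)\varphi_\alpha(y)\,\xi(ds,dy)$, whence $\|S(t)\|_\beta^2=\sum_\alpha\lambda_\alpha^\beta|\hat S_\alpha(t)|^2$ by Parseval. Applying the It\^o isometry \eqref{ito0} with $f=g=\phi_\alpha(t-\cdot)\varphi_\alpha$ and using the product structure of the integrand factorizes the right-hand side as $\ee[|\hat S_\alpha(t)|^2]=(\int_0^t\phi_\alpha^2(t-s)\,ds)\,\Psi_\alpha$, where $\Psi_\alpha$ collects the singular spatial form and the boundary form of \eqref{ito0}. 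Lemma \ref{lm-re}(ii) bounds the singular form by $C\lambda_\alpha^{1/2-H}$, while the substitution $u=\sqrt{\lambda_\alpha}\,y$ shows the boundary form is $O(1)$; combined with $\int_0^t\phi_\alpha^2(t-s)\,ds\le C\lambda_\alpha^{-1}$ uniformly in $t\in I$ (the computation in the proof of Lemma \ref{lm-re}(iii), valid for both SHE and SWE), this yields $\ee[|\hat S_\alpha(t)|^2]\le C\lambda_\alpha^{-1/2-H}$. Hence $\sup_{t\in I}\ee[\|S(t)\|_\beta^2]\le C\sum_\alpha\lambda_\alpha^{\beta-1/2-H}$, which is finite exactly when $\beta<H$; this is where the regularity threshold is forced. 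Since $S(t)$ is a centered Gaussian element of $\dot\hh^\beta$, the equivalence of Gaussian moments promotes this to $\sup_{t\in I}\ee[\|S(t)\|_\beta^p]\le C$ for all $p\ge2$.

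To move the supremum inside the expectation I would estimate a temporal increment $\ee[\|S(t)-S(\tau)\|_\beta^2]\le C|t-\tau|^{\theta}$ for some $\theta>0$ by the same spectral/It\^o-isometry computation applied to the kernel difference $\phi_\alpha(t-\cdot)-\phi_\alpha(\tau-\cdot)$; Gaussianity then gives $\ee[\|S(t)-S(\tau)\|_\beta^p]\le C|t-\tau|^{p\theta/2}$, and for $p$ large Kolmogorov's continuity theorem produces a continuous modification with $\ee[\sup_{t\in I}\|S(t)\|_\beta^p]<\infty$ (the small-$p$ case following by H\"older). The deterministic pieces are routine: for SHE $\hat\omega_\alpha(t)=e^{-\lambda_\alpha t}\hat u_{0,\alpha}$ gives $\sup_{t\in I}\|\omega(t)\|_\beta\le\|u_0\|_\beta$, while for SWE $\hat\omega_\alpha(t)=\cos(\sqrt{\lambda_\alpha}\,t)\hat u_{0,\alpha}+\lambda_\alpha^{-1/2}\sin(\sqrt{\lambda_\alpha}\,t)\hat v_{0,\alpha}$ gives $\sup_{t\in I}\|\omega(t)\|_\beta\le C(\|u_0\|_\beta+\|v_0\|_{\beta-1})$, which is the source of the $\dot\hh^{\beta-1}$ requirement on $v_0$. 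For the drift, the smoothing $\|(-\Delta)^{\beta/2}e^{(t-s)\Delta}\|_{\mathbb L^2\to\mathbb L^2}\le C(t-s)^{-\beta/2}$ (integrable since $\beta<H<2$) in the SHE case, and the gain of one derivative by $\lambda_\alpha^{-1/2}\sin(\sqrt{\lambda_\alpha}(t-s))$ together with $\dot\hh^0\hookrightarrow\dot\hh^{\beta-1}$ (as $\beta-1<0$) in the SWE case, both give $\|D(u)(t)\|_\beta\le C\int_0^t\|b(u(s))\|_{\mathbb L^2}\,ds$.

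Assembling the pieces: since $b$ is Lipschitz, $\|b(w)\|_{\mathbb L^2}\le C(1+\|w\|_{\mathbb L^2})\le C(1+\|w\|_\beta)$, so $\Phi$ maps the space into itself, and $\Phi(u)-\Phi(v)=D(u)-D(v)$ satisfies $\ee[\sup_{r\le t}\|D(u)(r)-D(v)(r)\|_\beta^p]\le C\int_0^t\ee[\sup_{r\le s}\|u(r)-v(r)\|_\beta^p]\,ds$ by the drift bound and H\"older; iterating makes a power of $\Phi$ a contraction, giving a unique mild solution. Substituting the same bound into $u=\omega+D(u)+S$ and invoking Gronwall's inequality then yields \eqref{bon-she} and \eqref{bon-swe}. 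The main obstacle I expect is the stochastic-convolution analysis: extracting the sharp decay $\ee[|\hat S_\alpha(t)|^2]\le C\lambda_\alpha^{-1/2-H}$ from the It\^o isometry so that $\beta<H$ is exactly the summability threshold, and, more delicately, the temporal-increment estimate needed to move the supremum inside the expectation uniformly in $t$; the drift and initial-data terms are comparatively standard once the boundedness and smoothing of the solution operators are in place.
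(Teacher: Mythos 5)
Your proposal is correct, and its core estimate coincides with the paper's: expand $S$ in the eigenbasis, apply the It\^o isometry \eqref{ito0} so that the singular spatial form is controlled by Lemma \ref{lm-re}(ii) (giving $\lambda_\alpha^{\frac12-H}$), the boundary form is $O(1)$, and the time integral contributes $C\lambda_\alpha^{-1}$, so that summability of $\sum_\alpha\lambda_\alpha^{\beta-\frac12-H}$ forces exactly $\beta<H$ (the boundary form only forcing $\beta<\frac12$). Where you genuinely diverge is in the surrounding architecture. The paper proceeds in two stages: it first verifies $\ee\big[\|S(t)\|_{\mathbb L^2}^p\big]<\infty$ and then cites the standard Picard iteration for existence and uniqueness, and for the regularity bound it pushes $\ee\sup_{t\in I}$ through the spectral sum and the isometry, in effect interchanging $\ee\sup_t$ with $\sup_t\ee$ without justification. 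You instead run the fixed-point argument directly in the Banach space normed by $\big(\ee\sup_{t\in I}\|\cdot\|_\beta^p\big)^{1/p}$ and control the supremum honestly, via a temporal-increment bound $\ee\big[\|S(t)-S(\tau)\|_\beta^2\big]\le C|t-\tau|^\theta$ together with Gaussianity and Kolmogorov's continuity criterion; this increment bound does hold, with any $\theta<H-\beta$, by inserting $1\wedge\lambda_\alpha|t-\tau|\le(\lambda_\alpha|t-\tau|)^{\theta}$ into the same isometry computation, so your route closes rigorously the one step the paper leaves loose, at the cost of the extra increment computation and the large-$p$/small-$p$ bookkeeping. Two small imprecisions in your sketch, both harmless: for SHE the smoothing estimate actually yields $\|D(u)(t)\|_\beta\le C\int_0^t(t-s)^{-\beta/2}\|b(u(s))\|_{\mathbb L^2}\,ds$ with a weakly singular (but integrable, since $\beta<\frac12$) weight, which still supports the iterated-contraction and Gronwall steps after an application of H\"older's inequality; and the boundary form is $O(1)$ simply because $\varphi_\alpha^2\le 2$, no substitution needed. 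Your explicit treatment of $\omega$ for the SWE (the $\cos$ and $\lambda_\alpha^{-1/2}\sin$ expansion, which is precisely what forces $v_0\in\dot\hh^{\beta-1}$) and of the drift's one-derivative gain for the wave propagator fills in what the paper dismisses as ``standard arguments''.
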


\begin{proof} 
Substituting \eqref{gre} for $G_{t-s}(x,y)$, using the fact that $S(t)$ is a Gaussian random field,  and applying the It\^o isometry \eqref{ito0}, we obtain 
\begin{align*}
& \ee\bigg[\|S(t)\|_{\mathbb L^2}^p \bigg] \\
&\le C\Bigg[\bigg(\int_{\OO}\int_{\OO} 
\frac{\sum\limits_{\alpha=1}^\infty |\varphi_\alpha(y)-\varphi_\alpha(z)|^2
\Big( \int_0^t \phi_\alpha^2(t-s)ds \Big) } {|y-z|^{2-2H}} dydz \\
&\qquad +\bigg(\int_{\OO} \Big( y^{2H-1}+(1-y)^{2H-1}\Big)dy\bigg) 
\bigg(\sum_{\alpha=1}^\infty \bigg( \int_0^t \phi_\alpha^2(t-s)ds \bigg) \bigg) \Bigg]^\frac p2,
\end{align*}
where $C$ is a constant depending only on $p$ and $H$. 
By the estimation \eqref{phi} and the facts that 
\begin{align*}
&\int_{\OO}\int_{\OO} |y-z|^{2H-1}dydz
=\frac{1}{(H+1)(2H+1)},\\
&\int_{\OO}\Big( y^{2H-1}+(1-y)^{2H-1}\Big)dy
=\frac{1}{H},
\end{align*}
we have
\begin{align*}
\ee\bigg[\|S(t)\|_{\mathbb L^2}^p \bigg] 
&\le C\bigg[1+\sum_{\alpha=1}^\infty 
\bigg( \int_0^t \phi_\alpha^2(t-s)ds \bigg) \bigg]^\frac p2
\end{align*}
For SHE, simple calculations yield that
\begin{align*}
\sum_{\alpha=1}^\infty \bigg( \int_0^t \phi_\alpha^2(t-s)ds \bigg)
=\sum_{\alpha=1}^\infty\frac{1-e^{-2\lambda_\alpha t}}{2\lambda_\alpha}
\le \sqrt{\frac t{2\pi}},
\end{align*}
and for SWE we have
\begin{align*}
\sum_{\alpha=1}^\infty \bigg( \int_0^t \phi_\alpha^2(t-s)ds \bigg)
=\sum_{\alpha=1}^\infty\frac{2\sqrt{\lambda_\alpha}t-\sin(2\sqrt{\lambda_\alpha}t)}{2\lambda_\alpha^{3/2}} 
<\infty.
\end{align*}
This shows that $\ee\left[\|S(t) \|_{\mathbb L^2}^p\right]<\infty$, which in turn ensures the existence of the unique mild solution of Eq. \eqref{see} as well as moments' uniform boundedness through the  standard Picard iteration argument (cf. \cite{Dal09} or \cite[Theorem 3.1]{HL16}).

It remains to show \eqref{bon-she} and \eqref{bon-swe}.
By the orthogonality and uniform boundedness of $\varphi_\alpha$ and  It\^o isometry \eqref{ito0}, there exists a  constant $C=C(p,H)$ such that 
\begin{align*}
&\ee\bigg[\sup_{t \in I}\|S(t)\|_\beta^p\bigg] \\
&\le C\Bigg(\sum_{\alpha=1}^\infty 
\bigg(\ee\bigg[\sup_{t \in I} \bigg|\int_0^t\int_{\OO} \lambda_\alpha^\frac{\beta}{2}\phi_\alpha(t-s)\varphi_\alpha(y)\xi(ds,dy)\bigg|^2\bigg] \bigg) \Bigg)^\frac p2 \\
&\le C\sup_{t \in I}\Bigg[\sum_{\alpha=1}^\infty \lambda_\alpha^\beta 
\bigg(\int_0^t \phi_\alpha^2(t-s) ds \bigg)
\bigg(\int_{\OO}\int_{\OO} \frac{|\varphi_\alpha(y)-\varphi_\alpha(z)|^2}{|y-z|^{2-2H}} dydz \bigg)  \Bigg]^\frac p2 \\
& + C\sup_{t \in I} \Bigg[\bigg(\int_{\OO} \bigg(y^{2H-1}+(1-y)^{2H-1} \bigg) dy\bigg)
\sum_{\alpha=1}^\infty \lambda_\alpha^\beta 
\bigg(\int_0^t \phi_\alpha^2(t-s) ds\bigg) \Bigg]^\frac p2 \\
:&=S_1+S_2.
\end{align*}
For $S_1$, the inequality \eqref{sob-int} yields
\begin{align*}
S_1
\le C \sup_{t \in I} \Bigg[\sum_{\alpha=1}^\infty 
\lambda_\alpha^{\beta-H+\frac12} 
\bigg(\int_0^t \phi_\alpha^2(t-s) ds\bigg)  \Bigg]^\frac p2
\le C \Bigg(\sum_{\alpha=1}^\infty \lambda_\alpha^{\beta-H-\frac{1}{2}}  \Bigg)^\frac p2,
\end{align*}
which converges if and only if $\beta<H$.
The second term $S_2$ can be estimated as
\begin{align*}
S_2
\le C \sup_{t \in I} \Bigg[\sum_{\alpha=1}^\infty \lambda_\alpha^\beta 
\bigg(\int_0^t \phi_\alpha^2(t-s) ds\bigg)  \Bigg]^\frac p2
\le C \Bigg(\sum_{\alpha=1}^\infty \lambda_\alpha^{\beta-1} \Bigg)^\frac p2, 
\end{align*}
which is finite if and only if $\beta<1/2$.
Therefore, for any $\beta\in [0,H)$ we have
\begin{align*}
\ee\bigg[\sup_{t \in I}\|S(t)\|_\beta^p\bigg]<\infty.
\end{align*}
Since $b$ is Lipschitz continuous, the standard arguments imply \eqref{bon-she} and \eqref{bon-swe}.
\end{proof}

We remark that well-posedness results were recently established for $H>1/4$ in \cite{BJQ15} for linear ($b=0$) SEEs  whose diffusion coefficient is given by an affine function $\sigma(u)=a_1u+a_2$ with $a_1,a_2\in \rr$, and in \cite{HHLNT15} for linear SHE where $\sigma(u)$ is differentiable with a Lipschitz derivative and $\sigma(0) = 0$. 
We also note that the authors in \cite{HL16} proved the optimal H\"older regularity for the solution of Eq. \eqref{see} in real line, i.e., $(t,x)\in I\times \rr$.

\section{Wong-Zakai approximations}
\label{sec3}

In this section, we regularize the white-fractional noise $\xi$ through the Wong-Zakai approximation and  establish the rate of convergence of the approximate mild solution of the  SEE with $\xi$ replaced by its Wong-Zakai approximation.

First we recall the Wong-Zakai approximation described in Section \ref{sec1}.  For partitions $\{I_i=(t_i,t_{i+1}],\ t_i=ik,\ i=0,1,\cdots,m-1\}$ and 
$\{\OO_j=(x_j,x_{j+1}],\ x_j=jh,\ j=0,1,\cdots,n-1\}$ of $I$ and $\OO$, with $k=T/m$ and $h=1/n$, the  Wong-Zakai approximation to $\xi(t,x)$ is given by
\begin{align}\label{w'}
\tilde \xi(t,x)=\sum_{i=0}^{m-1}\sum_{j=0}^{n-1}
\bigg[\frac{1}{kh}\int_{I_i}\int_{\OO_j}\xi(ds,dy)\bigg]
\chi_{i,j}(t,x),\ (t,x)\in I\times \overline{\OO}. 
\end{align}
It is easy to see from It\^{o} isometry \eqref{ito} that  $\tilde \xi(t)\in \mathbb H$ a.s., for any $t\in I$, 
moreover, for any $p\ge 2$, there exists a constant $C=C(p,T,H)$ such that 
\begin{align}\label{xi}
\sup_{t\in I}\bigg(\ee\bigg[\|\tilde \xi(t)\|^p_{\mathbb L^2}\bigg]\bigg)^\frac1p
\le C k^{-\frac12} h^{-\frac12}.
\end{align}
%
%
Now we consider  the regularized SEE with $\xi$ replaced by $\tilde \xi$ in Eq. \eqref{see}: 
\begin{align} \label{spde-dis}
L  \tilde u (t,x)=b( \tilde u (t,x))+\tilde \xi(t,x),\quad (t,x)\in I\times \overline{\OO},
\end{align}
with same initial and boundary values. 
Similarly to \eqref{mild}, we define  the mild solution of \eqref{spde-dis} as  $\tilde u$ such that 
\begin{align}\label{mild-dis}
 \tilde u (t,x)
=\omega(t,x)+\int_0^t\int_{\OO} G_{t-s}(x,y)b( \tilde u (s,y))dsdy+\tilde S(t,x)
\end{align}
for all $(t,x)\in I\times \overline{\OO}$,
where $\tilde S$ denotes the approximate convolution:
\begin{align}\label{con-a}
\tilde S(t,x):=\int_0^t\int_{\OO} G_{t-s}(x,y)\tilde \xi(s,y)dsdy,
\quad (t,x)\in I\times \overline{\OO}.
\end{align}
Using   \eqref{w'} we can rewrite $\tilde S(t,x)$  as a stochastic integral:
\begin{align*}
\tilde S(t,x)=\int_I\int_{\OO}
G_{t,s}^{m,n}(x,y) dW(s,y),\quad 
(t,x)\in I\times \overline{\OO},
\end{align*}
where
\begin{align*}
G_{t,s}^{m,n}(x,y)=\sum_{i=0}^{m-1}\sum_{j=0}^{n-1}\frac{\chi_{i,j}(s,y)}{kh} \int_{I_i}\int_{\OO_j}
G_{t-\tau}(x,z)d\tau dz
\end{align*}
for $(t,x),\ (s,y)\in I\times \overline{\OO}$.
As a result,
\begin{align*}
S (t,x)-\tilde S(t,x)
=\int_I\int_{\OO} \Big( G_{t-s}(x,y)-G^{m,n}_{t,s}(x,y) \Big)\xi(ds,dy),
\end{align*}
where $(t,x)\in I\times \overline{\OO}$.

In what follows, we derive an error estimate for the Wong-Zakai approximation of $\xi$ and then establish the convergence rate of the mild solution 
$ \tilde u$ of \eqref{spde-dis} to the mild solution $u$ of \eqref{see} in terms of $k$ and $h$. For this purpose we define,  for $t\in I$, 
\begin{align*}
\Psi_\alpha(t):
&=\sum_{i=0}^{m-1} \int_{I_i} 
\bigg[\int_{I_i} 
\bigg( \phi_\alpha(t-s)- \phi_\alpha(t-\tau) \bigg) d\tau \bigg]^2ds,\\
\Upsilon_\alpha(t):
&=\sum_{i=0}^{m-1} \int_{I_i}\int_{I_i}  \phi_\alpha(t-s)
\bigg( \phi_\alpha(t-s)- \phi_\alpha(t-\tau) \bigg) d\tau ds.
\end{align*}
The following estimations are frequently used in our analysis.\\

\begin{lm}
Let $\phi_{\alpha},  \ \alpha\in \nn_+$ be the basis functions related to SHE. Then there  exists a constant $C=C(T,H)$ such that 
\begin{align}\label{lm-she}
\sup_{t\in I}\sum_{\alpha=1}^\infty\Psi_\alpha(t)
\le C k^\frac{5}{2},
\quad 
\sup_{t\in I}\sum_{\alpha=1}^\infty \Upsilon_\alpha(t)
\le C k^\frac{3}{2}.
\end{align}
\end{lm}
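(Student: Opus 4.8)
The plan is to reduce both sums to a single \emph{local variance} sum and then bound that sum by interpolating two complementary estimates. Throughout I fix $t\in I$, abbreviate $g_\alpha(s):=\phi_\alpha(t-s)$ (recall the convention $g_\alpha(s)=0$ for $s>t$), and write $\bar g_\alpha^i:=\tfrac1k\int_{I_i}g_\alpha(\tau)\,d\tau$ for its average over $I_i$. Since $\int_{I_i}\big(\phi_\alpha(t-s)-\phi_\alpha(t-\tau)\big)\,d\tau=k\big(g_\alpha(s)-\bar g_\alpha^i\big)$, the bracket defining $\Psi_\alpha$ equals $k(g_\alpha(s)-\bar g_\alpha^i)$, so that
\begin{align*}
\Psi_\alpha(t)=k^2\sum_{i=0}^{m-1}\int_{I_i}\big(g_\alpha(s)-\bar g_\alpha^i\big)^2\,ds.
\end{align*}
The same computation gives $\Upsilon_\alpha(t)=k\sum_i\int_{I_i}g_\alpha(s)\big(g_\alpha(s)-\bar g_\alpha^i\big)\,ds$; since $\int_{I_i}(g_\alpha-\bar g_\alpha^i)\,ds=0$, I may replace the leading factor $g_\alpha(s)$ by $g_\alpha(s)-\bar g_\alpha^i$ to obtain $\Upsilon_\alpha(t)=k\sum_i\int_{I_i}(g_\alpha-\bar g_\alpha^i)^2\,ds$. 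Thus, setting $Q_\alpha(t):=\sum_i\int_{I_i}(g_\alpha-\bar g_\alpha^i)^2\,ds$, one has the clean identities $\Psi_\alpha=k^2Q_\alpha$ and $\Upsilon_\alpha=kQ_\alpha$, so that \eqref{lm-she} follows at once from the single estimate $\sup_{t\in I}\sum_{\alpha=1}^\infty Q_\alpha(t)\le Ck^{1/2}$.

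To bound $\sum_\alpha Q_\alpha$, I would record two estimates for $Q_\alpha$, recalling that $g_\alpha(s)=e^{-\lambda_\alpha(t-s)}$ on $[0,t]$ for the SHE. First, since removing the mean only decreases the $\mathbb L^2$ norm, $Q_\alpha\le\int_0^t g_\alpha^2\,ds=\tfrac{1-e^{-2\lambda_\alpha t}}{2\lambda_\alpha}\le\tfrac1{2\lambda_\alpha}$. Second, on every $I_i$ not containing $t$ the function $g_\alpha$ is $C^1$, so the Poincar\'e--Wirtinger inequality gives $\int_{I_i}(g_\alpha-\bar g_\alpha^i)^2\,ds\le\tfrac{k^2}{\pi^2}\int_{I_i}(g_\alpha')^2\,ds$, while on the one interval containing $t$, where $g_\alpha$ jumps at $s=t$, I bound the variance crudely by $\int_{I_i}g_\alpha^2\,ds\le k$; since $\int_0^t(g_\alpha')^2\,ds\le\lambda_\alpha/2$, summing over $i$ yields $Q_\alpha\le\tfrac{k^2\lambda_\alpha}{2\pi^2}+k$.

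I would then split the series at the threshold $\lambda_\alpha\sim1/k$, equivalently $\alpha\sim(\pi\sqrt k)^{-1}$, using $\lambda_\alpha=\pi^2\alpha^2$. For $\lambda_\alpha>1/k$ the first estimate gives $\sum_{\lambda_\alpha>1/k}Q_\alpha\le\tfrac12\sum_{\lambda_\alpha>1/k}\lambda_\alpha^{-1}\le C\sum_{\alpha>c/\sqrt k}\alpha^{-2}\le Ck^{1/2}$; for $\lambda_\alpha\le1/k$ the second estimate gives $\sum_{\lambda_\alpha\le1/k}Q_\alpha\le\tfrac{k^2}{2\pi^2}\sum_{\alpha\le c/\sqrt k}\lambda_\alpha+k\,\#\{\alpha:\lambda_\alpha\le1/k\}\le Ck^2\cdot k^{-3/2}+k\cdot k^{-1/2}=Ck^{1/2}$, where I compared the finite sums with the corresponding integrals. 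Adding the two pieces gives $\sum_\alpha Q_\alpha(t)\le Ck^{1/2}$, and since every bound above is independent of $t$, the supremum over $t\in I$ is controlled as well; multiplying by $k^2$ and $k$ respectively yields \eqref{lm-she}.

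The main obstacle is that neither of the two estimates for $Q_\alpha$ is adequate on its own: the Poincar\'e bound $k^2\lambda_\alpha$ makes $\sum_\alpha k^2\lambda_\alpha$ diverge, while the second-moment bound $\lambda_\alpha^{-1}$ alone produces only the inferior rate $k^2$ for $\sum_\alpha\Psi_\alpha$. The crux is therefore the interpolation between the two regimes at $\lambda_\alpha\sim1/k$, which is precisely what converts $k^2$ into the optimal $k^{5/2}$ (and $k^{3/2}$); a secondary technical point, easily dispatched by the second-moment bound, is the jump of the heat semigroup factor $g_\alpha$ at $s=t$ on the single subinterval that contains $t$.
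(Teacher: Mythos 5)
Your proof is correct, and it takes a genuinely different route from the paper's. The paper treats $\Psi_\alpha$ and $\Upsilon_\alpha$ separately: for each it splits the time sum into the intervals $I_i$ with $i\le M-2$ and the final interval containing $t$, writes the differences as $\int_\tau^s\lambda_\alpha e^{-\lambda_\alpha(t-u)}du$, sums the resulting geometric series in $i$, and reduces both statements to $\sum_\alpha\frac{1-e^{-2\lambda_\alpha k}}{\lambda_\alpha}\le Ck^{1/2}$, which is the same frequency-interpolation you perform at the end. Your argument replaces all of this exponential bookkeeping with two structural observations the paper does not exploit: first, the exact identities $\Psi_\alpha(t)=k^2Q_\alpha(t)$ and $\Upsilon_\alpha(t)=kQ_\alpha(t)$ (so $\Psi_\alpha=k\,\Upsilon_\alpha$ identically, which also makes the nonobvious positivity of $\Upsilon_\alpha$ transparent and collapses the two claimed estimates into the single bound $\sup_t\sum_\alpha Q_\alpha\le Ck^{1/2}$); second, the soft pair of bounds $Q_\alpha\le\min\bigl(\tfrac1{2\lambda_\alpha},\,\tfrac{k^2\lambda_\alpha}{2\pi^2}+k\bigr)$ coming from ``variance $\le$ second moment'' and Poincar\'e--Wirtinger, with the jump interval at $s=t$ correctly quarantined by the crude bound $\le k$ (which is harmless, since it is only invoked for the $O(k^{-1/2})$ low frequencies, while the high frequencies are covered by the moment bound, which needs no smoothness). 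What your approach buys is unification, shorter computations, and portability: the same skeleton with $\int_0^t(g_\alpha')^2\,ds\le T$ and $\int_0^t g_\alpha^2\,ds\le T/\lambda_\alpha$, split at $\lambda_\alpha\sim k^{-2}$, immediately yields the SWE bounds $Ck^3$ and $Ck^2$ of the next lemma, which the paper asserts by repeating the argument. What the paper's computation buys is elementarity (no Poincar\'e--Wirtinger input) and explicit constants tied to the heat kernel. One small presentational caveat: when invoking Poincar\'e--Wirtinger you should state that it is applied only on those $I_i$ lying entirely in $[0,t]$, where $g_\alpha\in C^1$, and that intervals entirely to the right of $t$ contribute zero; you do say this, but it is the one place where an unwary reader could object, since $g_\alpha\notin H^1$ on the interval containing $t$.
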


\begin{proof} 
Let  $M$ be the integer such that $t\in [t_{M-1},t_M)$.
Define for $i\in [0,M-1]$,
\begin{align*}
\Psi_i^\alpha(t):=\int_{I_i} \left[\int_{I_i} 
\Big( \phi_\alpha(t-s)- \phi_\alpha(t-\tau) \Big)
d\tau \right]^2ds.
\end{align*}
Then
$\Psi_i^{\alpha}(t)
=\int_{I_i} \Big[\int_{I_i}\int_\tau^s \lambda_\alpha e^{-\lambda_\alpha(t-u)}dud\tau \Big]^2ds$.
When $i\in [0,M-2]$,
\begin{align*}
\Psi_i^{\alpha}(t)
&\le \int_{I_i} \bigg[\int_{I_i}\int_{t_i}^{s\vee \tau} \lambda_\alpha e^{-\lambda_\alpha(t-u)}dud\tau \bigg]^2ds   \\
&\le 2\int_{I_i} \bigg[\int_{I_i}\int_{t_i}^s \lambda_\alpha e^{-\lambda_\alpha(t-u)}dud\tau \bigg]^2ds  \\
&\quad +2\int_{I_i} \bigg[\int_{I_i}\int_{t_i}^\tau \lambda_\alpha e^{-\lambda_\alpha(t-u)}dud\tau \bigg]^2ds     \\
&\le 4k^2\int_{I_i} \bigg[\int_{t_i}^s \lambda_\alpha e^{-\lambda_\alpha(t-u)}du \bigg]^2ds   \\
&\le 2k^2\frac{(1-e^{\lambda_\alpha k})^2 
(1-e^{2\lambda_\alpha k})}{\lambda_\alpha} e^{-2\lambda_\alpha(t-t_i)}.
\end{align*}
Summing up $\Psi_i^{\alpha}(t)$ from $0$ to $M-2$, we obtain 
\begin{align*}
\sum_{i=0}^{M-2} \Psi_i^\alpha(t)
&\le 2k^2\frac{(1-e^{\lambda_\alpha k})^2}{\lambda_\alpha}.
\end{align*}
On the other hand,
\begin{align*}
\Psi_{M-1}^\alpha(t)
&=\int_{t_{M-1}}^t \bigg[\int_{t_{M-1}}^t\int_\tau^s  \lambda_\alpha e^{-\lambda_\alpha(t-u)}dud\tau+\int_t^{t_M}e^{-\lambda_\alpha(t-s)} d\tau\bigg]^2ds    \nonumber \\
&\quad + \int_t^{t_M}\bigg[\int_{t_{M-1}}^t  e^{-\lambda_\alpha(t-\tau)}d\tau\bigg]^2ds 
:=\Psi_{M-1,1}^\alpha(t)+\Psi_{M-1,2}^\alpha(t).
\end{align*}
The first term $\Psi_{M-1,1}^\alpha(t)$ has the estimation:
\begin{align*}
\Psi_{M-1,1}^\alpha(t)
&\le 2\int_{t_{M-1}}^t \bigg[\int_{t_{M-1}}^t\int_\tau^s  \lambda_\alpha e^{-\lambda_\alpha(t-u)}dud\tau \bigg]^2ds \\
&\qquad \qquad +\frac{1-e^{-2\lambda_\alpha(t-t_{M-1})}}{\lambda_\alpha} k^2.
\end{align*}
Similar to the analysis and calculations for $\Psi_i^\alpha(t)$, $i\in [0,M-2]$, the first term on the right hand side above can be controlled by
\begin{align*}
&2\int_{t_{M-1}}^t \bigg[\int_{t_{M-1}}^t\int_\tau^s  \lambda_\alpha e^{-\lambda_\alpha(t-u)}dud\tau \bigg]^2ds \\\
&\le 8k^2\int_{t_{M-1}}^t \bigg[\int_{t_{M-1}}^s \lambda_\alpha e^{-\lambda_\alpha(t-\tau)}d\tau \bigg]^2ds   \\
&\le 4k^2\frac{(1-e^{2\lambda_\alpha k})^3}{\lambda_\alpha}.
\end{align*}
As a result,
\begin{align*}
\Psi_{M-1,1}^\alpha(t)
&\le 4k^2\frac{[1-e^{2\lambda_\alpha k}]^3}{\lambda_\alpha}+ \frac{1-e^{-2\lambda_\alpha(t-t_{M-1})}}{\lambda_\alpha} k^2
\le 5k^2\frac{1-e^{2\lambda_\alpha k}}{\lambda_\alpha}.
\end{align*}
Since $1-e^{-x}\le x$ for any $x\le 0$, 
\begin{align*}
\Psi_{M-1,2}^\alpha(t)
&=\int_t^{t_M}\bigg[\int_{t_{M-1}}^t e^{-\lambda_\alpha(t-\tau)}d\tau \bigg]^2ds
 =\frac{(1-e^{-2\lambda_\alpha(t-t_{M-1})})^2}{\lambda_\alpha} k \\
&\le \frac{1-e^{-2\lambda_\alpha(t-t_{M-1})}}{\lambda_\alpha} k^2
\le \frac{1-e^{2\lambda_\alpha k}}{\lambda_\alpha} k^2.
\end{align*}
Therefore,
$\Psi_{M-1}^\alpha(t)\le 6k^2\frac{1-e^{2\lambda_\alpha k}}{\lambda_\alpha}$.
As a consequence, 
\begin{align*}
\sum_{\alpha=1}^\infty\Psi_\alpha(t)
\le 8k^2\sum_{\alpha=1}^\infty\frac{1-\phi_\alpha(2k)}{\lambda_\alpha}
\le C k^\frac{5}{2}.
\end{align*}

Define for $i\in [0,M-1]$,
\begin{align*}
\Upsilon_i^\alpha(t):=\int_{I_i}\int_{I_i}  \phi_\alpha(t-s)
\Big( \phi_\alpha(t-s)-\phi_\alpha(t-\tau) \Big) d\tau ds.
\end{align*}
When $i\in [0,M-2]$, Similar to the arguments for $\Psi_i^\alpha(t)$, we have
\begin{align*}
\Upsilon_i^\alpha(t)
&=\int_{I_i} e^{-\lambda_\alpha(t-s)}
\bigg[\int_{I_i}\int_s^\tau \lambda_\alpha e^{-\lambda_\alpha(t-u)} dud\tau \bigg] ds\\
&\le \int_{I_i} e^{-\lambda_\alpha(t-s)}
\bigg[\int_{I_i}\int_{t_i}^s \lambda_\alpha e^{-\lambda_\alpha(t-u)} dud\tau\bigg] ds\\
&\quad +\int_{I_i} e^{-\lambda_\alpha(t-s)}
\bigg[\int_{I_i}\int_{t_i}^\tau \lambda_\alpha e^{-\lambda_\alpha(t-u)} dud\tau\bigg] ds\\
&\le 2k \Big(1-e^{-\lambda_\alpha k}\Big)
\bigg(\int_{I_i} e^{-2\lambda_\alpha(t-s)}ds\bigg).
\end{align*}
Summing up $\Upsilon_i^\alpha(t)$ both for $i$ from $0$ to $M-2$ and $\alpha\in \nn_+$, we obtain
\begin{align*}
\sum_{\alpha=1}^\infty \sum_{i=0}^{M-2} \Upsilon_i^\alpha(t)
\le k\sum_{\alpha=1}^\infty\frac{1-e^{-\lambda_\alpha k}}{\lambda_\alpha}
\le C k^\frac{3}{2}.
\end{align*}

On the other hand,
\begin{align*}
\Upsilon_{M-1}^\alpha(t)
&=\int_{t_{M-1}}^t \int_{t_{M-1}}^t \phi_\alpha(t-s)
\Big(\phi_\alpha(t-s)-\phi_\alpha(t-\tau)\Big) d\tau ds  \\
&\quad +\int_{t_{M-1}}^t \int_t^{t_M} \phi_\alpha(t-s) d\tau ds \\
&=(t-t_{M-1})\bigg(\int_{t_{M-1}}^t \phi^2_\alpha(t-s) ds\bigg)
-\bigg(\int_{t_{M-1}}^t \phi_\alpha(t-s) ds\bigg)^2 \\
&\quad +(t_M-t) \bigg(\int_{t_{M-1}}^t \phi_\alpha(t-s) ds\bigg)  \\
&\le (t-t_{M-1}) \bigg(\int_{t_{M-1}}^t \phi^2_\alpha(t-s) ds\bigg) \\
&\quad +(t_M-t) \bigg(\int_{t_{M-1}}^t \phi_\alpha(t-s) ds\bigg). 
\end{align*}
Thus 
\begin{align*}
\sum_{\alpha=1}^\infty \Upsilon_{M-1}^\alpha(t)
&\le k\sum_{\alpha=1}^\infty
\bigg(\frac{1-e^{-2\lambda_\alpha (t-t_{M-1})}}{2\lambda_\alpha}\bigg) \\
&\quad +k\sum_{\alpha=1}^\infty 
\bigg(\frac{1-e^{-\lambda_\alpha (t-t_{M-1})}}{\lambda_\alpha} \bigg)
\le C k^\frac{3}{2}, 
\end{align*}
which completes the proof of \eqref{lm-she}.
\end{proof}

Following the same arguments as in the above lemma,  we have the following estimations for SWE.

\begin{lm}
Let $\phi_{\alpha},  \ \alpha\in \nn_+$ be the basis functions related to SWE. Then there  exists a constant $C=C(T,H)$ such that 
\begin{align}\label{lm-swe}
\sup_{t\in I}\sum_{\alpha=1}^\infty\Psi_\alpha(t)
\le C k^3,
\quad 
\sup_{t\in I}\sum_{\alpha=1}^\infty \Upsilon_\alpha(t)
\le C k^2.
\end{align}
\end{lm}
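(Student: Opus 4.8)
The plan is to reduce both bounds in \eqref{lm-swe} to a single quantity and then play off two competing estimates on the temporal increments of the wave kernel $\phi_\alpha(r)=\sin(\sqrt{\lambda_\alpha}\,r)/\sqrt{\lambda_\alpha}$. Let $\Upsilon_i^\alpha(t)$ and $\Psi_i^\alpha(t)$ denote the $i$-th summands defining $\Upsilon_\alpha(t)$ and $\Psi_\alpha(t)$. Expanding the squares and using $\int_{I_i}d\tau=k$, a purely algebraic computation (valid for any kernel, heat or wave) gives
\begin{align*}
\Upsilon_i^\alpha(t)
&=k\int_{I_i}\phi_\alpha^2(t-s)\,ds-\left(\int_{I_i}\phi_\alpha(t-\tau)\,d\tau\right)^2
=\frac12\int_{I_i}\int_{I_i}(\phi_\alpha(t-s)-\phi_\alpha(t-\tau))^2\,ds\,d\tau,\\
\Psi_i^\alpha(t)&=k\,\Upsilon_i^\alpha(t).
\end{align*}
In particular $\Psi_\alpha(t)=k\,\Upsilon_\alpha(t)$, so it suffices to prove $\sup_{t\in I}\sum_{\alpha=1}^\infty\Upsilon_\alpha(t)\le Ck^2$; multiplying by $k$ then yields the bound on $\sum_\alpha\Psi_\alpha(t)$.

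For the symmetric form I would bound the integrand by two elementary inequalities. Since $|\phi_\alpha|\le\lambda_\alpha^{-1/2}$, the squared increment is at most $4\lambda_\alpha^{-1}$. On the other hand $\phi_\alpha(0)=0$, so the zero-extension of $r\mapsto\phi_\alpha(r)$ across $r=0$ is continuous with $|\phi_\alpha'|=|\cos(\sqrt{\lambda_\alpha}\cdot)|\le1$; hence $|\phi_\alpha(t-s)-\phi_\alpha(t-\tau)|\le|s-\tau|\le k$ for all $s,\tau\in I_i$, uniformly, including the interval $I_{M-1}$ containing $t$ where the kernel is truncated to zero. Combining the two,
\begin{align*}
(\phi_\alpha(t-s)-\phi_\alpha(t-\tau))^2\le\min\{4\lambda_\alpha^{-1},\,k^2\}\quad\text{for }s,\tau\in I_i,
\end{align*}
so that $\Upsilon_i^\alpha(t)\le\tfrac{k^2}{2}\min\{4\lambda_\alpha^{-1},k^2\}$.

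Summing over the at most $T/k$ intervals $I_i$ that meet $[0,t]$ (using $Mk\le T$) gives $\Upsilon_\alpha(t)\le\tfrac{T}{2}k\min\{4\lambda_\alpha^{-1},k^2\}$ uniformly in $t$, and the frequency sum then rests on the elementary estimate $\sum_{\alpha=1}^\infty\min\{\lambda_\alpha^{-1},k^2\}\le Ck$, obtained by splitting at $\alpha\sim(\pi k)^{-1}$, where $\lambda_\alpha\sim k^{-2}$: the $O(k^{-1})$ low frequencies each contribute $k^2$, while the tail is dominated by $\sum_{\alpha\gtrsim(\pi k)^{-1}}\lambda_\alpha^{-1}\le Ck$. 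This delivers $\sum_\alpha\Upsilon_\alpha(t)\le Ck^2$, and hence $\sum_\alpha\Psi_\alpha(t)\le Ck^3$, uniformly in $t$.

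The step that needs care, and the reason I would not transcribe the heat-kernel argument of the previous lemma verbatim, is the absence of damping in the wave kernel. Writing the increment as $\int_\tau^s\cos(\sqrt{\lambda_\alpha}(t-u))\,du$ and merely bounding $|\cos|\le1$ (the naive wave analogue of the previous proof) leaves \emph{no} decay in $\alpha$, so the frequency series diverges; and controlling $\Upsilon_i^\alpha$ through the crude product $|\phi_\alpha(t-s)|\,|s-\tau|$ produces a spurious factor $\ln(1/k)$ from the low frequencies. Keeping the symmetric squared-increment form and inserting the two-sided bound $\min\{4\lambda_\alpha^{-1},k^2\}$ exploits the oscillation (amplitude $\lambda_\alpha^{-1/2}$) and the unit Lipschitz bound simultaneously, which is precisely what cancels the logarithm and produces the clean orders $k^3$ and $k^2$. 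The boundary interval $I_{M-1}$ requires no separate treatment here---unlike in the heat case, where $\phi_\alpha(0)=1$ makes the zero-extension discontinuous---because $\phi_\alpha(0)=0$ keeps the truncated wave kernel Lipschitz across $t$.
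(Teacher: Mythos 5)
Your proof is correct, and it takes a genuinely different route from the paper's. The paper does not write out a proof for the SWE case at all: it simply invokes ``the same arguments'' as the SHE lemma, whose proof splits the time intervals into $i\le M-2$ and the boundary interval $I_{M-1}$, writes increments through the derivative of the kernel, and extracts decay in $\alpha$ from the exponential damping $e^{-\lambda_\alpha(t-t_i)}$ (arriving at sums like $k^2\sum_\alpha\lambda_\alpha^{-1}\big(1-\phi_\alpha(2k)\big)$). You instead exploit two algebraic identities that the paper never uses: with $A=\int_{I_i}\phi_\alpha(t-\tau)\,d\tau$ and $B=\int_{I_i}\phi_\alpha^2(t-s)\,ds$, one has $\Upsilon_i^\alpha(t)=kB-A^2=\tfrac12\iint_{I_i\times I_i}(\phi_\alpha(t-s)-\phi_\alpha(t-\tau))^2\,ds\,d\tau$ and $\Psi_i^\alpha(t)=k^2B-kA^2=k\,\Upsilon_i^\alpha(t)$, which collapses both estimates into one nonnegative quantity. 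The pointwise bound $(\phi_\alpha(t-s)-\phi_\alpha(t-\tau))^2\le\min\{4\lambda_\alpha^{-1},k^2\}$ is legitimate, including across the truncation point, precisely because $\phi_\alpha(0)=0$ makes the zero-extended wave kernel globally $1$-Lipschitz; and the frequency sum $\sum_\alpha\min\{\lambda_\alpha^{-1},k^2\}\le Ck$ (split at $\alpha\sim k^{-1}$) closes the argument. Your diagnosis of why a verbatim transcription fails is also accurate: the wave kernel has no damping, so the Lipschitz bound alone gives no decay in $\alpha$ and the series diverges, while the product bound $|\phi_\alpha(t-s)|\,|s-\tau|$ yields $\sum_\alpha\lambda_\alpha^{-1/2}\min\{\lambda_\alpha^{-1/2},k\}\sim k\ln(1/k)$ and hence a spurious logarithm. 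What your approach buys is a shorter, case-free proof (no separate treatment of $I_{M-1}$) that handles $\Psi_\alpha$ and $\Upsilon_\alpha$ simultaneously and makes explicit where the $\alpha$-decay comes from; what the paper's approach buys is structural uniformity with the SHE lemma, at the cost of leaving the reader to check that the adaptation actually goes through without the exponential damping.
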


Applying It\^o isometry \eqref{ito0} and the above two lemmas, we have the following  estimate for the error between  the convolution 
$S$ and $\tilde S$ given by \eqref{con} and \eqref{con-a}, respectively.

\begin{tm}\label{ww} For  $p\geq 2$, there exists a constant $C=C(p,T,H)$ such that for SHE,
\begin{align}\label{ww-she}
\sup_{t\in I}\left(\ee\left[\|S (t)-\tilde S(t)\|_{\mathbb L^2}^p\right] \right)^\frac1p
\le C (h^H+k^\frac14 h^{H-\frac12})
\end{align}
and for SWE,
\begin{align}\label{ww-swe}
\sup_{t\in I}\left(\ee\left[ \|S (t)-\tilde S(t)\|_{\mathbb L^2}^p\right] \right)^\frac1p
\le C(h^H+k^\frac12 h^{H-\frac12}).
\end{align}
\end{tm}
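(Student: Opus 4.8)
The plan is to reduce everything to a second-moment computation and then exploit the eigenexpansion. Since for fixed $(t,x)$ the quantity $S(t,x)-\tilde S(t,x)$ is a stochastic integral of the deterministic kernel $G_{t-s}(x,y)-G^{m,n}_{t,s}(x,y)$ against $\xi$, it is a centered Gaussian variable, so $S(t)-\tilde S(t)$ is an $\mathbb L^2(\OO)$-valued Gaussian random field. By the equivalence of moments for Gaussian vectors (Kahane--Khintchine), it suffices to treat $p=2$, the general case following with a constant $C=C(p)$. I would therefore bound $\ee[\|S(t)-\tilde S(t)\|_{\mathbb L^2}^2]=\int_\OO\ee[|S(t,x)-\tilde S(t,x)|^2]\,dx$ uniformly in $t$. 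Fixing $x$ and applying the It\^o isometry \eqref{ito0} to $D(s,y):=G_{t-s}(x,y)-G^{m,n}_{t,s}(x,y)$ expanded through \eqref{gre}, one has $D(s,y)=\sum_\alpha\varphi_\alpha(x)\,c_\alpha(s,y)$, where for $(s,y)\in I_i\times\OO_j$ the coefficient is $c_\alpha(s,y)=\phi_\alpha(t-s)\varphi_\alpha(y)-\overline{\phi}_\alpha^{\,i}\,\overline{\varphi}_\alpha^{\,j}$ with cell averages $\overline{\varphi}_\alpha^{\,j}=\frac1h\int_{\OO_j}\varphi_\alpha(z)\,dz$ and $\overline{\phi}_\alpha^{\,i}=\frac1k\int_{I_i}\phi_\alpha(t-\tau)\,d\tau$. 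Integrating the isometry over $x$ and using $\int_\OO\varphi_\alpha\varphi_\beta\,dx=\delta_{\alpha\beta}$ to diagonalize the $x$-integral, I obtain
\begin{align*}
\ee\big[\|S(t)-\tilde S(t)\|_{\mathbb L^2}^2\big]
&=\frac{H(1-2H)}{2}\int_I\int_{\OO}\int_{\OO}
\frac{\sum_\alpha\big(c_\alpha(s,y)-c_\alpha(s,z)\big)^2}{|y-z|^{2-2H}}\,dy\,dz\,ds\\
&\quad+H\int_I\int_{\OO}\sum_\alpha c_\alpha^2(s,y)\,\varrho(y)\,dy\,ds,
\end{align*}
where $\varrho(y)=y^{2H-1}+(1-y)^{2H-1}$.

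The decisive step is to split $c_\alpha=A_\alpha+B_\alpha$ into a purely spatial and a purely temporal error,
\begin{align*}
A_\alpha(s,y)=\phi_\alpha(t-s)\big(\varphi_\alpha(y)-\overline{\varphi}_\alpha^{\,j(y)}\big),\qquad
B_\alpha(s,y)=\big(\phi_\alpha(t-s)-\overline{\phi}_\alpha^{\,i(s)}\big)\overline{\varphi}_\alpha^{\,j(y)},
\end{align*}
where $i(s),j(y)$ index the cells containing $s,y$; by the triangle inequality in the Gaussian Hilbert space $\mathcal H$ the error splits into (twice) the $A$-contribution plus (twice) the $B$-contribution. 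For the $A$-contribution I would separate the $(y,z)$-integral into same-cell and different-cell parts. On the diagonal the averages cancel, $A_\alpha(s,y)-A_\alpha(s,z)=\phi_\alpha(t-s)(\varphi_\alpha(y)-\varphi_\alpha(z))$, and \eqref{phi} followed by \eqref{same} yields exactly $O(h^{2H})$; off the diagonal, Jensen's inequality gives $\sum_\alpha(\varphi_\alpha(y)-\overline{\varphi}_\alpha^{\,j})^2\int_0^t\phi_\alpha^2(t-s)\,ds\le Ch$ via \eqref{phi}, which combined with \eqref{dif} again gives $O(h^{2H})$, while the $\varrho$-weighted term is $O(h)\le O(h^{2H})$ since $2H\le1$. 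Thus the spatial error is $O(h^{2H})$, the source of the $h^H$ terms in \eqref{ww-she} and \eqref{ww-swe}.

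For the $B$-contribution the factors $\overline{\varphi}_\alpha^{\,j}$ are uniformly bounded by $\sqrt2$ and cancel on the diagonal, so only different cells feed the singular kernel; pulling out $(\overline{\varphi}_\alpha^{\,j}-\overline{\varphi}_\alpha^{\,l})^2\le C$ and applying \eqref{dif} bounds the singular term by $Ch^{2H-1}\sum_\alpha\int_I(\phi_\alpha(t-s)-\overline{\phi}_\alpha^{\,i(s)})^2\,ds$, while the $\varrho$-weighted term is bounded by the same quantity up to a constant and is absorbed since $h^{2H-1}\ge1$. Now the time integrals equal $k^{-2}\Psi_\alpha(t)$ (and, if one expands the temporal error exactly rather than by the triangle inequality, the resulting cross terms are governed by $\Upsilon_\alpha(t)$). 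Hence \eqref{lm-she} gives $\sum_\alpha\int_I(\phi_\alpha-\overline{\phi}_\alpha)^2\,ds\le Ck^{1/2}$, so the temporal error for the SHE is $O(k^{1/2}h^{2H-1})$, which after taking square roots is the $k^{1/4}h^{H-1/2}$ term of \eqref{ww-she}. For the SWE the identical computation with \eqref{lm-swe} yields $\sum_\alpha\int_I(\phi_\alpha-\overline{\phi}_\alpha)^2\,ds\le Ck$, hence $O(kh^{2H-1})$ and the $k^{1/2}h^{H-1/2}$ term of \eqref{ww-swe}. Adding the spatial and temporal bounds and taking square roots finishes the proof.

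The step I expect to be the main obstacle is the different-cell temporal estimate, where \eqref{dif} contributes the \emph{negative} power $h^{2H-1}$ that must be compensated by the $k$-smallness of the time-averaging. The delicate point is the cell $I_{M-1}$ containing the current time $t$: there the approximation averages $\phi_\alpha$ across $t$, so the clean orthogonality used on the other cells fails and a separate, more careful estimate is required. This is precisely what the special treatment of the index $M-1$ in the proofs of \eqref{lm-she} and \eqref{lm-swe} supplies, so once those lemmas are invoked the remaining bookkeeping with \eqref{same}, \eqref{dif} and \eqref{phi} is routine.
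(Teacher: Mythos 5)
Your proposal is correct, and its skeleton matches the paper's proof: reduce to $p=2$ by Gaussianity, apply the It\^o isometry \eqref{ito0}, pass to the eigenexpansion and use orthogonality in $x$, split the spatial integrals into same-cell and different-cell parts, and invoke \eqref{same}, \eqref{dif}, \eqref{phi} together with Lemma estimates \eqref{lm-she}/\eqref{lm-swe}. The one genuine structural difference is your decomposition: you split the kernel error \emph{additively}, $c_\alpha=A_\alpha+B_\alpha$ (pure spatial error plus pure temporal error), and apply Young's inequality at the level of variances, so the cross terms never appear and you only need the $\Psi_\alpha$ half of \eqref{lm-she}/\eqref{lm-swe} (via your correct identification $\sum_i\int_{I_i}(\phi_\alpha(t-s)-\overline{\phi}_\alpha^{\,i})^2\,ds=k^{-2}\Psi_\alpha(t)$). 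The paper instead expands the product of the two full differences bilinearly inside $I_{12}$, which produces a spatial$\times$spatial term ($I_{121}$), a temporal$\times$temporal term controlled by $\Psi_\alpha$ ($I_{122}$), and two spatial$\times$temporal cross terms ($I_{123}$, $I_{124}$) that require the separate $\Upsilon_\alpha$ estimate. Your route is therefore slightly leaner — one fewer lemma estimate to prove and no cross-term bookkeeping — at the cost of a harmless factor of $2$; both give the same rates $h^{2H}+k^{1/2}h^{2H-1}$ (SHE) and $h^{2H}+k\,h^{2H-1}$ (SWE) before taking square roots, and your treatment of the $\varrho$-weighted term (bounded by $Ch+Ck^{1/2}$ and absorbed since $h\le h^{2H}$ and $h^{2H-1}\ge 1$) mirrors the paper's $I_2$ estimate.
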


\begin{proof} 
We only prove \eqref{ww-she} for $p=2$, the other cases can be handled by the fact that $S-\tilde S$ is a Gaussian field.
By It\^o isometry \eqref{ito0},
\begin{align*}
\ee\bigg[\|S(t)-\tilde S(t)\|_{\mathbb L^2}^2\bigg] 
=\frac{H(1-2H)}{2}  I_1(t)+H I_2(t),
\end{align*}
where 
\begin{align*}
I_1(t)
&=\int_I \int_{\OO} \int_{\OO}  \int_{\OO} 
\left| G_{t-s}(x,y)-G^{m,n}_{t-s}(x,y)
-G_{t-s}(x,z)+G^{m,n}_{t-s}(x,z) \right|^2 \\
&\qquad \qquad \qquad \qquad \qquad \qquad \qquad \qquad \qquad 
|y-z|^{2H-2}dxdydzds, \\
I_2(t)
&=\int_I \int_{\OO}  \int_{\OO} 
\left| G_{t-s}(x,y)-G^{m,n}_{t-s}(x,y)
\right|^2 \Big( y^{2H-1}+(1-y)^{2H-1}\Big) dxdyds.
\end{align*}
For the first term, we have 
\begin{align*}
I_1(t)&=\sum_{j=0}^{n-1}\int_{\OO_j}\int_{\OO_j} 
\frac{\int_I \int_{\OO}|G_{t-s}(x,y)-G_{t-s}(x,z)|^2 dxds}{|y-z|^{2-2H}} dydz \\
&\quad+\frac{1}{(kh)^2}\sum_{i=0}^{m-1}\sum_{j\neq l}\int_{\OO} \int_{I_i}\int_{\OO_j}\int_{\OO_l} \\
&\qquad \Bigg[\bigg(\int_{I_i}\int_{\OO_j}
\Big(G_{t-s}(x,y)- G_{t-\tau}(x,r)\Big) drd\tau\bigg) |y-z|^{2H-2}  
\\
&\qquad \qquad \times \bigg(\int_{I_i}\int_{\OO_l}\Big(G_{t-s}(x,z)- G_{t-\tau}(x,r)\Big) drd\tau \bigg) \Bigg] dydzdsdx \\
&=:I_{11}(t)+I_{12}(t). 
\end{align*}
By \eqref{gg} and \eqref{same}, we have
\begin{align*}
I_{11}(t)
&\le C \sum_{j=0}^{n-1}\int_{\OO_j}\int_{\OO_j}  |y-z|^{2H-1} dydz
\le C h^{2H}.
\end{align*}
Applying H\"older inequality repetitively and Fubini theorem, we obtain
\begin{align*}
I_{12}(t)
&=\frac{1}{(kh)^2}\sum_{i=0}^{m-1}\sum_{j\neq l}\sum_{\alpha=1}^\infty \int_{I_i}\int_{\OO_j}\int_{\OO_l}  \Bigg[ |y-z|^{2H-2} \nonumber   \\
&\quad \Bigg(\int_{I_i}\int_{\OO_j}
\bigg[\phi_\alpha (t-s)\varphi_\alpha (y)-
 \phi_\alpha (t-\tau_1)\varphi_\alpha (r_1) \bigg]
d\tau_1 dr_1 \Bigg)  \\
&\quad \Bigg( \int_{I_i}\int_{\OO_l}
\bigg[\phi_\alpha (t-s)\varphi_\alpha (z)-
 \phi_\alpha (t-\tau_2)\varphi_\alpha (r_2)\bigg]
d\tau_2 dr_2\Bigg) \Bigg] dydzds\\
&=:I_{121}(t)+I_{122}(t)+I_{123}(t)+I_{124}(t),
\end{align*}
where
\begin{align*}
I_{121}(t)
&=\frac{1}{h^2}\sum_{j\neq l}\sum_{\alpha=1}^\infty 
\bigg( \int_0^t \phi_\alpha^2(t-s)ds \bigg) 
\bigg(\int_{\OO_j}\int_{\OO_l}|y-z|^{2H-2}dydz\bigg)  \\
&\qquad\qquad
\bigg( \int_{\OO_j}[\varphi_\alpha(y)-\varphi_\alpha(r_1)]dr_1\bigg)
\bigg(\int_{\OO_l} [\varphi_\alpha(z)-\varphi_\alpha(r_2)] dr_2\bigg), \\
I_{122}(t)&=\frac{1}{(kh)^2}\sum_{j\neq l} 
\bigg( \int_{\OO_j}\int_{\OO_l} |y-z|^{2H-2} dydz\bigg) \\
&\qquad\qquad \left[\sum_{\alpha=1}^\infty \Psi_\alpha(t) 
\bigg(\int_{\OO_j}\varphi_\alpha(r_1) dr_1\bigg) 
\bigg(\int_{\OO_l}\varphi_\alpha(r_2) dr_2\bigg) \right], \\
I_{123}(t)&=\frac{1}{kh^2}\sum_{\alpha=1}^\infty \Upsilon_\alpha(t)  \bigg( \int_{\OO_l}\varphi_\alpha(r_2) dr_2 \bigg) \\
&\qquad\qquad 
\Bigg(\sum_{j\neq l}\int_{\OO_j}\int_{\OO_l}\int_{\OO_j}  
\frac{\varphi_\alpha(y)-\varphi_\alpha(r_1)}{|y-z|^{2-2H}} dr_1dydz\Bigg), 
\\
I_{124}(t)&=\frac{1}{kh^2}\sum_{\alpha=1}^\infty \Upsilon_\alpha(t)  
\bigg(\int_{\OO_j}\varphi_\alpha(r_1) dr_1 \bigg) \\
&\qquad\qquad 
\Bigg(\sum_{j\neq l}\int_{\OO_j}\int_{\OO_l}\int_{\OO_l} 
\frac{\varphi_\alpha(z)-\varphi_\alpha(r_2)}{|y-z|^{2-2H}} dr_2dydz\Bigg).
\end{align*}
By  Young's inequality,  estimates \eqref{phi} and \eqref{dif},  we have that 
\begin{align*}
& |I_{121}(t)| \\
&\le \frac{C}{h^2} \sum_{j\neq l} 
\int_{\OO_j}\int_{\OO_l} 
\frac{\int_{\OO_j} \sum\limits_{\alpha=1}^\infty 
|\varphi_\alpha(y)-\varphi_\alpha(r_1)|^2 
\Big( \int_0^t \phi_\alpha^2(t-s)ds \Big) dr_1}
{|y-z|^{2-2H}} dydz \\
&\le \frac{C}{h^2} \sum_{j\neq l}\int_{\OO_j}\int_{\OO_l}
\frac{\int_{\OO_j}\int_{\OO_l} \big(|y-r_1|+|z-r_2| \big) dr_1dr_2}
{|y-z|^{2-2H}} dydz  \\
&\le C h \sum_{j\neq l}\int_{\OO_j}\int_{\OO_l}|y-z|^{2H-2}dydz
\le C h^{2H}.
\end{align*}
From  \eqref{lm-she}  and \eqref{dif}, we have, for the second term $I_{122}(t)$,
\begin{align*}
|I_{122}(t)|
&\le C \frac{1}{k^2} 
\bigg(\sum_{\alpha=1}^\infty \Psi_\alpha(t)\bigg) 
\bigg(\sum_{j\neq l}\int_{\OO_j}\int_{\OO_l} |y-z|^{2H-2} dydz\bigg)\\
&\le C k^{-2}h^{2H-1} \bigg(\sum_{\alpha=1}^\infty \Psi_\alpha(t)\bigg)
\le C (h^{2H}+k^\frac{1}{2}h^{2H-1}).
\end{align*}
By \eqref{lm-she}, we have, for the third term $I_{123}(t)$,
\begin{align*}
|I_{123}(t)|
&\le \frac C{kh}
\sum_{j\neq l}\int_{\OO_j}\int_{\OO_l} 
\frac{\sum\limits_{\alpha=1}^\infty \Upsilon_\alpha(t)
\Big(\int_{\OO_j} \Big(\varphi_\alpha(y)-\varphi_\alpha(r_1)\Big) dr_1\Big)}
{|y-z|^{2-2H}} dydz\\
&\le \frac Ck \bigg(\sum_{\alpha=1}^\infty \Upsilon_\alpha(t)\bigg) 
\bigg(\sum_{j\neq l}\int_{\OO_j}\int_{\OO_l} |y-z|^{2H-2}   dydz\bigg) \\
&\le C (h^{2H}+k^\frac{1}{2}h^{2H-1}).
\end{align*}
Analogously, the last term $I_{124}(t)$ satisfies
\begin{align*}
|I_{124}(t)|
&\le C (h^{2H}+k^\frac{1}{2}h^{2H-1}).
\end{align*}
The above four estimates yield
\begin{align*}
\left|I_{12}(t) \right|
\le C (h^{2H}+k^\frac{1}{2}h^{2H-1}).
\end{align*}
Combining the estimations of $I_{11}$ and $I_{12}$, we get
\begin{align} \label{e1}
|I_1(t)|\le C (h^{2H}+k^\frac{1}{2}h^{2H-1}).
\end{align}

Next we estimate $I_2$.  Young's inequality yields
\begin{align*}
I_2(t)
&\le \frac{C}{(kh)^2}\sum_{i=0}^{m-1}\sum_{j=0}^{n-1}
\Bigg[ \int_{\OO}\int_{I_i}\int_{\OO_j} 
\Big( y^{2H-1}+(1-y)^{2H-1} \Big) \\
&\qquad \bigg|\int_{I_i}\int_{\OO_j} 
\Big( G_{t-s}(x,y)-G_{t-s}(x,z) \Big)dzd\tau \bigg|^2 \Bigg] dsdydx   \\
&\quad+\frac{C}{(kh)^2}\sum_{i=0}^{m-1}\sum_{j=0}^{n-1}
\Bigg[ \int_{\OO}\int_{I_i}\int_{\OO_j}  \Big( y^{2H-1}+(1-y)^{2H-1} \Big) \\
&\qquad   \bigg|\int_{I_i}\int_{\OO_j} 
\Big(G_{t-s}(x,z)-G_{t-\tau}(x,z)\Big) dzd\tau \bigg|^2 \Bigg] dsdydx \\
&=:I_{21}(t)+I_{22}(t).
\end{align*}
By \eqref{gre},  orthogonality of $\phi_\alpha$,  and \eqref{phi}, we have
\begin{align*}
I_{21}(t)
&=\frac{C}{h^2}\sum_{j=0}^{n-1}\int_{\OO_j}  
\Bigg[ \sum_{\alpha=1}^\infty \bigg( \int_0^t \phi_\alpha^2(t-s)ds \bigg)  \\
&\qquad \bigg(\int_{\OO_j}|\varphi_\alpha(y)-\varphi_\alpha(z)|dz \bigg)^2  \Big( y^{2H-1}+(1-y)^{2H-1}\Big)\Bigg]  dy      \\
&\le \frac{C}{h}\sum_{j=0}^{n-1}\int_{\OO_j}\int_{\OO_j}   \Bigg(\bigg[\sum_{\alpha=1}^\infty
\bigg( \int_0^t \phi_\alpha^2(t-s)ds \bigg) 
|\varphi_\alpha(y)-\varphi_\alpha(z)|^2\bigg] \\
&\qquad\qquad\qquad\qquad 
\Big( y^{2H-1}+(1-y)^{2H-1}\Big)\Bigg) dydz  \\
&\le  \frac{C}{h}\sum_{j=0}^{n-1}\int_{\OO_j}\int_{\OO_j}   |y-z| \Big( y^{2H-1}+(1-y)^{2H-1} \Big)dydz\le C h.
\end{align*}
Similarly, for $E_{22}(t)$, by \eqref{lm-she}, we have 
\begin{align*}
I_{22}(t)
&=\frac{C}{(kh)^2}\sum_{j=0}^{n-1} \sum_{\alpha=1}^\infty
\Psi_\alpha(t) 
\bigg[\int_{\OO_j} \bigg|\int_{\OO_j}\varphi_\alpha(z)dz\bigg|^2  \\
&\qquad \qquad \qquad \qquad \Big( y^{2H-1}+(1-y)^{2H-1} \Big)dy\bigg]   \\
&\le \frac{C}{k^2} \Bigg[\int_{\OO} \Big( y^{2H-1}+(1-y)^{2H-1} \Big) dy\Bigg]
\Bigg[\sum_{\alpha=1}^\infty \Psi_\alpha(t) \Bigg]
\le C k^\frac12.
\end{align*}
It follows  from the above two estimates that
\begin{align}\label{e2}
I_2(t)
\le C (h+k^\frac{1}{2}).
\end{align}
Combining \eqref{e1} and \eqref{e2}, we obtain  \eqref{ww-she}.
\end{proof}

Now we are ready to estimate the error between the exact solution $u$ of Eq. \eqref{see} and the approximate solution $ \tilde u $ of Eq. \eqref{spde-dis}.

\begin{tm}\label{uu}
Let $p\ge 2$.
Assume that $u_0\in \mathbb L^p(\Omega; \mathbb L^2)$ and 
$v_0\in \mathbb L^p(\Omega; \dot{\hh}^{-1})$.
Let $u$ and $\tilde u$ be the mild solutions of Eq. \eqref{see} and Eq. \eqref{spde-dis}, respectively. 
Then there exists a constant $C=C(p,T,H,u_0,v_0)$ such that for SHE,
\begin{align}\label{uu-she}
\sup_{t\in I}\bigg(\ee\bigg[ \|u (t)-\tilde u(t)\|_{\mathbb L^2}^p\bigg] \bigg)^\frac1p
\le C (h^H+k^\frac14 h^{H-\frac12})
\end{align}
and for SWE,
\begin{align}\label{uu-swe}
\sup_{t\in I}\bigg(\ee\bigg[ \|u (t)-\tilde u(t)\|_{\mathbb L^2}^p\bigg] \bigg)^\frac1p
\le C(h^H+k^\frac12 h^{H-\frac12}).
\end{align}
\end{tm}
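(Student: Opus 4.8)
The plan is to exploit the structural similarity between the two mild solutions and reduce the error to the convolution difference already controlled by Theorem \ref{ww}. Since $u$ and $\tilde u$ are defined by \eqref{mild} and \eqref{mild-dis} with the \emph{same} deterministic component $\omega$, subtracting the two integral equations cancels $\omega$ and yields, for every $(t,x) \in I \times \overline{\OO}$,
$$u(t,x) - \tilde u(t,x) = \int_0^t\int_{\OO} G_{t-s}(x,y)\big(b(u(s,y)) - b(\tilde u(s,y))\big)\,ds\,dy + \big(S(t,x) - \tilde S(t,x)\big).$$
Thus the error is the sum of a drift-feedback term and the stochastic convolution difference, the latter being bounded by the right-hand sides of \eqref{ww-she} and \eqref{ww-swe}. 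The assumptions $u_0 \in \mathbb L^p(\Omega; \mathbb L^2)$ and $v_0 \in \mathbb L^p(\Omega; \dot{\hh}^{-1})$ are precisely the $\beta = 0$ case of Theorem \ref{wel}, so both $u$ and $\tilde u$ are well defined with finite $p$-th moments and the quantity $\rho(t) := (\ee[\|u(t) - \tilde u(t)\|_{\mathbb L^2}^p])^{1/p}$ is finite.

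First I would control the drift term. Writing $G_{t-s}$ in its spectral form \eqref{gre}, its action on an $\mathbb L^2$ function is a Fourier multiplier with symbol $\phi_\alpha(t-s)$. For the SHE one has $|\phi_\alpha(t-s)| = e^{-\lambda_\alpha(t-s)} \le 1$, while for the SWE $|\phi_\alpha(t-s)| = |\sin(\sqrt{\lambda_\alpha}(t-s))|/\sqrt{\lambda_\alpha} \le t-s \le T$; in either case the Green's operator is bounded on $\mathbb L^2$ uniformly in $t-s \in [0,T]$. Combining this with the Lipschitz continuity of $b$ and Minkowski's integral inequality in $\mathbb L^p(\Omega; \mathbb L^2)$, I obtain
$$\bigg(\ee\bigg[\bigg\|\int_0^t G_{t-s}\big(b(u(s)) - b(\tilde u(s))\big)\,ds\bigg\|_{\mathbb L^2}^p\bigg]\bigg)^{\frac1p} \le C\int_0^t \bigg(\ee\big[\|u(s) - \tilde u(s)\|_{\mathbb L^2}^p\big]\bigg)^{\frac1p}\,ds = C\int_0^t \rho(s)\,ds.$$

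Then I would close the estimate by Gronwall's lemma. Denoting by $\Theta$ the (time-independent) bound from Theorem \ref{ww} — namely $C(h^H + k^{1/4}h^{H-1/2})$ for the SHE and $C(h^H + k^{1/2}h^{H-1/2})$ for the SWE — the two displays above combine to give $\rho(t) \le C\int_0^t \rho(s)\,ds + \Theta$. Gronwall's inequality then yields $\sup_{t\in I}\rho(t) \le Ce^{CT}\Theta$, which is exactly \eqref{uu-she} and \eqref{uu-swe}.

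I do not expect a serious obstacle here, since the genuinely hard analysis — the sharp estimation of the convolution error against the singular fractional kernels, via the lemmas \eqref{lm-she} and \eqref{lm-swe} and the It\^o isometry \eqref{ito0} — has already been carried out in Theorem \ref{ww}. The only point requiring minor care is the uniform $\mathbb L^2$-boundedness of the wave propagator $G_{t-s}$ for the SWE, where $\phi_\alpha$ does not decay but is nonetheless bounded by $T$, so that the Gronwall constant must absorb this $T$-dependence (reflected in the factor $e^{CT}$). Beyond that the argument is a routine perturbation estimate around the already-established convolution bound.
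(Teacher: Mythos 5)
Your proposal is correct and follows essentially the same route as the paper's proof: subtract the two mild formulations to cancel $\omega$, bound the drift feedback term using the uniform $\mathbb L^2$-boundedness of the Green's operator (via $\sup_{t}\sup_\alpha|\phi_\alpha(t)|\le 1$, which also covers the SWE case since $\lambda_\alpha\ge\pi^2$) together with the Lipschitz continuity of $b$, invoke Theorem \ref{ww} for the convolution difference, and close with Gronwall. The only cosmetic differences are that you use Minkowski's integral inequality on the $\mathbb L^p(\Omega;\mathbb L^2)$ norm where the paper manipulates $p$-th powers inside the expectation, and you bound the wave symbol by $T$ rather than by $1/\pi$; neither affects the argument.
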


\begin{proof} By Theorem \ref{ww} and the Lipschitz continuity of the drift coefficient function $b$, it suffices to prove that
\begin{align}\label{uw}
\ee\bigg[\|u(t)-\tilde u(t)\|_{\mathbb L^2}^p\bigg]
\le C \ee\bigg[\|S(t)-\tilde S(t)\|_{\mathbb L^2}^p\bigg].
\end{align}
Subtracting \eqref{mild-dis} from \eqref{mild}, we get
\begin{align*}
u(t,x)- \tilde u (t,x)
&=\int_0^t\int_{\OO} G_{t-s}(x,y)
\Big(b(u(s,y))-b( \tilde u (s,y))\Big) dsdy \\
&\quad +S (t,x)-\tilde S(t,x).
\end{align*}
Taking $\mathbb L^2$-norm and then the expectation in the above equation, we have
\begin{align*}
&\ee\bigg[\|u(t)-\tilde u(t)\|_{\mathbb L^2}^p\bigg]   \\
&\le C \ee\Bigg[\bigg\|\int_0^t\int_{\OO} G_{t-s}(\cdot,y)
\Big( b(u(s,y))-b(\tilde u (s,y)) \Big) dsdy\bigg\|_{\mathbb L^2}^p\Bigg] \\
&\quad +C\ee\bigg[\|S(t)-\tilde S(t)\|_{\mathbb L^2}^p\bigg] \\
&\le C \ee \Bigg[\bigg(\int_0^t \sum_{\alpha=1}^\infty \phi^2_\alpha(t-s) (\varphi_\alpha, u(s)- \tilde u (s) )^2ds \bigg)^\frac p2 \Bigg] \\
&\quad +C\ee\bigg[\|S(t)-\tilde S(t)\|_{\mathbb L^2}^p\bigg] \\
&\le C \int_0^t \bigg(\sup_{\alpha\in \nn_+}\phi_\alpha^p(t-s)\bigg)
\ee\bigg[\|u(s)-\tilde u(s)\|_{\mathbb L^2}^p\bigg] ds \\
&\quad +C\ee\bigg[\|S(t)-\tilde S(t)\|_{\mathbb L^2}^p\bigg].
\end{align*}
Since $\sup\limits_{t\in I}\sup\limits_{\alpha\in \nn_+}|\phi_\alpha(t)|\le 1$, we obtain \eqref{uw} by Gronwall's inequality.
\end{proof}

\begin{rk}\label{rk-wz}
(i) If $\xi$ reduces to the space-time white noise, i.e., $H=1/2$, then for SHE we have
\begin{align*}
\sup_{t\in I}\bigg(\ee\bigg[ \|u (t)-\tilde u(t)\|_{\mathbb L^2}^p\bigg] \bigg)^\frac1p
\le C (h^\frac12+k^\frac14),
\end{align*}
and for SWE we have
\begin{align*}
\sup_{t\in I}\bigg(\ee\bigg[ \|u (t)-\tilde u(t)\|_{\mathbb L^2}^p\bigg] \bigg)^\frac1p
\le C(h^\frac12+k^\frac12),
\end{align*}
which shows that $\tilde u$ converges to $u$ as $h,k\rightarrow 0^+$ without any restriction on $h$ and $k$. 
This estimation for SHE improves related result in \cite[Theorem 2.3]{ANZ98} where the authors proved the convergence of 
-Zakai approximation under the assumption that $h/k^\frac{1}{4}\rightarrow 0$:
\begin{align*}
\sup_{t\in I}\bigg(\ee\bigg[ \|u (t)-\tilde u(t)\|_{\mathbb L^2}^2\bigg] \bigg)^\frac12
&\le C(k^\frac14+h k^{-\frac14}).
\end{align*}
Similar result for SWE had also been established in \cite[Theorem 2]{CY07}.

(ii) When $H<1/2$, the Wong-Zakai approximation converges only if 
$k/h^{2-4H}\rightarrow 0$ for SHE and
$k/h^{1-2H}\rightarrow 0$ for SWE. 
Moreover, the convergence rate is optimal if we set
$k=h^2$ for SHE and $k=h$ for SWE:
\begin{align*}
\sup_{t\in I}\bigg(\ee\bigg[ \|u (t)-\tilde u(t)\|_{\mathbb L^2}^p\bigg] \bigg)^\frac1p
\le C h^H.
\end{align*}

(iii) In many literatures (see \cite{KLS10, Yan05} for examples), the order of convergence of various numerical discretizations for SEEs  includes  a negative infinitesimal component $\epsilon$. Our results remove this factor. 
\end{rk}

\section{Galerkin approximations for regularized equations}
\label{sec4}

In this section, we apply the Galerkin  methods to spatially discretize  the regularized equation \eqref{spde-dis} and conduct the error estimates. Specifically, we apply the Galerkin finite element method  to discretize the regularized SHE and the 
spectral Galerkin method  to discretize the regularized SWE.

\subsection{Galerkin finite element method for regularized SHE}

Let $V_h\subset \dot{\mathbb H}^1$ be a family of linear finite element spaces, i.e., $V_h$ consists of continuous piecewise affine polynomials with respect to the same  partition $\{\OO_j\}_{i=0}^{n-1}$ of $\OO$ as in Section \ref{sec3}. 
To introduce the finite element formulation for the regularized SHE \eqref{spde-dis}, we use $P_h:L^2\rightarrow V_h$ to denote the orthogonal projection operator defined by $(P_h u,v)=(u,v)$ for any $u\in \mathbb L^2$ and $v\in V_h$,  and $R_h:L^2\rightarrow V_h$ to denote the Ritz projection operator defined by
$(\nabla R_hu,  \nabla v)=(\nabla u,\nabla v)$ for any $u\in \dot{\mathbb H}^1$ and $v\in V_h$,  where $(\cdot,\cdot)$ denotes the inner product in $\mathbb L^2$. 
Then the semidiscrete Galerkin finite element approximation for Eq. \eqref{spde-dis} is to find an $\tilde u _h\in V_h$ such that $ \tilde u _h(0)=P_h u_0$ and for any $t>0$, 
\begin{align}\label{fem}
d \tilde u _h(t)
=\Delta_h  \tilde u _h(t)dt+P_h\Big(b( \tilde u _h(t))+\tilde \xi(t)\Big) dt,
\end{align}
where $\Delta_h$ is the discrete analogue of Dirichlet Laplacian defined by
$(-\Delta_h u,v)=(\nabla u, \nabla v)$ for any $u,v\in V_h$. 

Define $E(t)=e^{t \Delta}$ and $E_h(t)=e^{t \Delta_h}$ for $t\geq 0$. 
Then Eq. \eqref{spde-dis} and Eq. \eqref{fem} admit  unique mild solutions
\begin{align}\label{fem-mild0}
u (t)=E(t)u_0+\int_0^t E(t-s) \Big(b( \tilde u (s))+\tilde \xi(s)\Big) ds,
\quad t\in I
\end{align}
and  respectively,
\begin{align}\label{fem-mild}
 \tilde u _h(t)
=E_h(t)P_h u_0+\int_0^t E_h(t-s)P_h \Big(b( \tilde u _h(s))+\tilde \xi(s)\Big)ds,
\quad t\in I.
\end{align}

We note that the solution of homogenous equation $\partial_t u=\Delta u$ under the  Dirichlet  boundary condition \eqref{dbc} with initial data $v$ is smooth for positive time due to the fact that the solution operator $E(t)$ of the initial value problem is an analytic semigroup satisfying (cf. \cite[Lemma 3.2]{Tho06})
\begin{align}\label{sem-smo}
\|E(t)v\|_\beta\le C t^{-\frac{\beta-\alpha}{2}} \|v\|_\alpha, 
\quad t>0,\ 0\le \alpha\le \beta.
\end{align}
Define $F_h(t):=E(t)-E_h(t)P_h$.
The following error estimate will play an important role in our error analysis (cf. \cite{Tho06}, Theorem 3.5):
\begin{align}\label{smo}
\|F_h(t) v\|_{\mathbb L^2}
\le C h^\beta t^{-\frac{\beta-\alpha}{2}} \|v\|_\alpha,
\quad t>0,\ 0\le \alpha\le \beta\le 2.
\end{align}

\begin{tm}\label{fem-ord}
Let $p\ge 2$.
Assume that $u_0\in \mathbb L^p(\Omega; \dot \hh^H)$.
Then for any $\epsilon>0$, there exists a constant $C=C(p,T,H,u_0)$ such that 
\begin{align}\label{fem-ord1}
\sup_{t\in I}\bigg(\ee\bigg[\| \tilde u (t)- \tilde u _h(t)\|_{\mathbb L^2}^p\bigg]\bigg)^\frac1p
\le C \Big(h^H+\epsilon^{-1}h^{\frac32-\epsilon} k^{-\frac12}\Big).
\end{align}
\end{tm}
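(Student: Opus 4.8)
The plan is to follow the classical variation-of-constants splitting and to exploit the smoothing estimate \eqref{smo} together with the $\mathbb L^2$-bound \eqref{xi} on the regularized noise. Writing $F_h(t)=E(t)-E_h(t)P_h$ and subtracting \eqref{fem-mild} from \eqref{fem-mild0}, I would first decompose the error as
\begin{align*}
\tilde u(t)-\tilde u_h(t)
&=F_h(t)u_0+\int_0^t F_h(t-s)b(\tilde u(s))\,ds\\
&\quad+\int_0^t E_h(t-s)P_h\big(b(\tilde u(s))-b(\tilde u_h(s))\big)\,ds
+\int_0^t F_h(t-s)\tilde\xi(s)\,ds,
\end{align*}
where the drift contribution has been split by inserting and subtracting $E_h(t-s)P_h b(\tilde u(s))$. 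The first three terms are handled by deterministic semigroup arguments, while the last (noise) term is the analytic heart of the estimate.

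For the initial term, applying \eqref{smo} with $\alpha=\beta=H\in[0,1/2)$ gives $\|F_h(t)u_0\|_{\mathbb L^2}\le C h^H\|u_0\|_H$ uniformly in $t$, so that its $\mathbb L^p(\Omega)$-norm is $O(h^H)$ under the hypothesis $u_0\in\mathbb L^p(\Omega;\dot\hh^H)$. For the drift-regularity term I would apply Minkowski's integral inequality and then \eqref{smo} with $\alpha=0$, $\beta=H$, so that $\|F_h(t-s)b(\tilde u(s))\|_{\mathbb L^2}\le C h^H (t-s)^{-H/2}\|b(\tilde u(s))\|_{\mathbb L^2}$; since $H/2<1$ the kernel $(t-s)^{-H/2}$ is integrable, and the linear growth of $b$ together with the uniform moment bound $\sup_{s\in I}(\ee\|\tilde u(s)\|_{\mathbb L^2}^p)^{1/p}<\infty$ (which follows by combining Theorem \ref{wel} and Theorem \ref{uu}) yields an $O(h^H)$ contribution. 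The drift-difference term is controlled using $\|E_h(t-s)P_h\|_{\mathbb L^2\to\mathbb L^2}\le1$ and the Lipschitz continuity of $b$, producing the Gronwall term $C\int_0^t(\ee\|\tilde u(s)-\tilde u_h(s)\|_{\mathbb L^2}^p)^{1/p}\,ds$.

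The key step is the noise term $\int_0^t F_h(t-s)\tilde\xi(s)\,ds$, and here the exponent $\tfrac32-\epsilon$ and the factor $\epsilon^{-1}$ emerge from a balance between spatial accuracy and temporal integrability. I would again use Minkowski's inequality and then \eqref{smo} with $\alpha=0$ and $\beta=2-\epsilon$, admissible since $0\le 2-\epsilon\le2$, to get $\|F_h(t-s)\tilde\xi(s)\|_{\mathbb L^2}\le C h^{2-\epsilon}(t-s)^{-1+\epsilon/2}\|\tilde\xi(s)\|_{\mathbb L^2}$. Taking $\mathbb L^p(\Omega)$-norms and inserting \eqref{xi}, namely $(\ee\|\tilde\xi(s)\|_{\mathbb L^2}^p)^{1/p}\le C k^{-1/2}h^{-1/2}$, the remaining integral $\int_0^t(t-s)^{-1+\epsilon/2}\,ds=\frac{2}{\epsilon}t^{\epsilon/2}\le C\epsilon^{-1}$ supplies the singular constant, while $h^{2-\epsilon}\cdot h^{-1/2}=h^{3/2-\epsilon}$ gives the claimed rate $C\epsilon^{-1}h^{3/2-\epsilon}k^{-1/2}$. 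The choice $\beta=2-\epsilon$ rather than the critical $\beta=2$ is forced because $(t-s)^{-\beta/2}$ must stay integrable; this is the origin of the infinitesimal loss, and the main subtlety is to recognize that the crude triangle-inequality bound already attains the optimal order here.

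Finally, collecting the four estimates and writing $\Phi(t):=(\ee\|\tilde u(t)-\tilde u_h(t)\|_{\mathbb L^2}^p)^{1/p}$, I obtain $\Phi(t)\le C(h^H+\epsilon^{-1}h^{3/2-\epsilon}k^{-1/2})+C\int_0^t\Phi(s)\,ds$, and Gronwall's inequality then yields \eqref{fem-ord1} uniformly in $t\in I$. I expect the only genuine care to be needed in justifying the uniform moment bound on $\tilde u$ and in verifying the admissible ranges of $(\alpha,\beta)$ in \eqref{smo} for each term.
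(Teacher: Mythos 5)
Your proof is correct and follows essentially the same route as the paper: the same four-term decomposition via $F_h(t)=E(t)-E_h(t)P_h$, the estimate \eqref{smo} with $\alpha=\beta=H$ for the initial datum and with $\alpha=0$, $\beta=2-\epsilon$ for the noise term combined with \eqref{xi}, the boundedness of $E_h(t-s)P_h$ on $\mathbb L^2$ for the Lipschitz difference, and Gronwall's inequality. The only cosmetic difference is the drift term $\int_0^t F_h(t-s)b(\tilde u(s))\,ds$, which you bound using $\beta=H$ (giving $O(h^H)$) while the paper uses $\beta=2-\epsilon$ (giving $O(\epsilon^{-1}h^{2-\epsilon})$); both contributions are dominated by the stated rate, so the two arguments are interchangeable.
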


\begin{proof} Denote by $e_h(t):= \tilde u (t)- \tilde u _h(t)$. 
Subtracting \eqref{fem-mild0} from \eqref{fem-mild}, we have 
\begin{align}\label{fem1}
e_h(t)
&=F_h(t)u_0+\int_0^t F_h(t-s)\tilde \xi(s)ds+\int_0^t F_h(t-s)b( \tilde u (s))ds  \nonumber \\
&\quad +\int_0^t E_h(t-s)P_h \Big( b( \tilde u (s))-b( \tilde u _h(s))\Big)ds.
\end{align}
The finite element estimate \eqref{smo} with $\alpha=\beta=H$ yields
\begin{align*}
\|F_h(t)u_0\|_{\mathbb L^p(\Omega; \mathbb L^2)}
\le C h^H \|u_0\|_{\mathbb L^p(\Omega; \dot{\hh}^H)}
\le C h^H.
\end{align*}
By Minkowski's inequality,  \eqref{smo} with $\alpha=0$ and $\beta=2-\epsilon$, and  \eqref{xi}, we have 
\begin{align*}
&\left\|\int_0^t F_h(t-s)\tilde \xi(s)ds\right\|_{\mathbb L^p(\Omega; \mathbb L^2)}  \\
&\le C \int_0^t \left\|F_h(t-s)\tilde \xi(s)\right\|_{\mathbb L^p(\Omega; \mathbb L^2)} ds\nonumber \\
&\le C \sup_{t\in I}\|\tilde \xi(t)\|_{\mathbb L^p(\Omega; \mathbb L^2)}
\sup_{t\in I} \left(\int_0^t h^{2-\epsilon} (t-s)^{-(1-\frac{\epsilon}{2})}ds\right) \\
&\le C \epsilon^{-1}h^{\frac32-\epsilon} k^{-\frac12}.
\end{align*}
Similarly, by the Lipschitz continuity of $b$ and \eqref{bon-she} we have
\begin{align*}
\left\|\int_0^t F_h(t-s)b( \tilde u (s))ds\right\|_{\mathbb L^p(\Omega; \mathbb L^2)}
\le C \epsilon^{-1} h^{2-\epsilon}.
\end{align*}
To estimate the  last term on the right hand side of \eqref{fem1}, we note that, from  the smoothness property \eqref{sem-smo} and the finite element estimate \eqref{smo}, 
$\|E_h(t)P_hv\| \le C \|v\|$.
As a consequence,
\begin{align*}
&\left\|\int_0^t E_h(t-s)P_h \Big(b( \tilde u (s))-b( \tilde u _h(s)) \Big) ds \right\|_{\mathbb L^p(\Omega; \mathbb L^2)} \\
&\le C \int_0^t \|e_h(s)\|_{\mathbb L^p(\Omega; \mathbb L^2)} ds.
\end{align*}
Combining the above estimations, we obtain 
\begin{align*}
\sup_{t\in I}\|e_h(t)\|_{\mathbb L^p(\Omega; \mathbb L^2)}
\le C \Big(h^H+\epsilon^{-1}h^{\frac32-\epsilon} k^{-\frac12} \Big)
+C \int_0^t \|e_h(s)\|_{\mathbb L^p(\Omega; \mathbb L^2)} ds,
\end{align*}
from which we conclude \eqref{fem-ord1} by Gronwall's inequality.
\end{proof}

\begin{rk}
(i) In the derivation of the convergence rate of Galerkin finite element approximation for SHE, we do not need a priori regularity information about 
$ \tilde u $ such as the estimates of $\|\partial_t \tilde u (t)\|_{\mathbb L^p(\Omega; \mathbb L^2)}$ and $\|\tilde u (t)\|_{\mathbb L^p(\Omega; \dot{\hh}^2)}$ needed in \cite{ANZ98, DZ02}. 

(ii) Moreover, when $H<1/2$ and $k=h^2$, we take $\epsilon=1/2-H$ and then 
\begin{align*}
\sup_{t\in I}\bigg(\ee\bigg[\| \tilde u (t)- \tilde u _h(t)\|_{\mathbb L^2}^p\bigg]\bigg)^\frac1p
\le C h^H.
\end{align*}
\end{rk}

\subsection{Spectral Galerkin method for regularized SWE}

In this subsection, we use the spectral Galerkin method to spatially discretize  the regularized SWE and derive its rate of convergence. 

Let $V_N:=\text{span}\{\varphi_\alpha\}_{\alpha=1}^N$ and $P_N:L^2\rightarrow V_N$ the correponding orthogonal projection operator  defined by $(P_Nu,v)=(u,v)$ for any $v\in V_N$. Then the spectral Galerkin method for Eq. \eqref{spde-dis} is to find an $\tilde u _N\in V_N$ such that $ \tilde u _N(0)=P_Nu_0$ and for any $t>0$, $v\in V_N$,
\begin{align}\label{spe}
(\partial_t \tilde u _N(t),v)=(P_Nv_0,v)+\int_0^t ( \tilde u _N(s), \Delta v)ds+\int_0^t (b( \tilde u _N(s))+\tilde \xi(s), v)ds.
\end{align}

It is well-known that $\|P_N\|=1$, $\Delta P_N u=P_N \Delta u$ for any $u\in \dot{\hh}^2$ and $\|u-P_N u\|_{\mathbb L^2} \le N^{-1}\|u\|_1$ for any $u\in \dot{\hh}^1$ (cf. \cite[Lemma 4]{CY07}).  To estimate the error between  $ \tilde u (t)$ and $ \tilde u _N(t)$ we first split it into two parts as follows. 
\begin{align}\label{div}
 \tilde u (t)- \tilde u _N(t)
=\Big(\tilde u (t)-P_N  \tilde u (t)\Big)
+\Big(P_N  \tilde u (t)- \tilde u _N(t)\Big).
\end{align}
The first part has the following estimation.

\begin{prop}\label{reg}
Let $p\ge 2$. 
Assume that $u_0\in \mathbb L^p(\Omega; \dot{\hh}^1)$ and $v_0\in \mathbb L^p(\Omega; \mathbb L^2)$.
Then the approximate solution $\tilde u$ of the regularized SWE is in $L^\infty([0,T]; \mathbb L^p(\Omega; \dot\hh ^1))$. Moreover, there exists a constant $C=C(p,T,H,u_0,v_0)$ such that
\begin{align}\label{u2}
\sup_{t\in I}\bigg(\ee\bigg[\| \tilde u (t)-P_N  \tilde u (t)\|_{\mathbb L^2}^p\bigg]\bigg)^\frac1p
\le CN^{-1} h^{H-1}. 
\end{align}
\end{prop}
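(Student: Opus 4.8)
The plan is to reduce the stated estimate to a uniform bound on the $\dot\hh^1$-norm of $\tilde u$, and then to control the three components of the mild solution \eqref{mild-dis} separately. Since $\|v-P_N v\|_{\mathbb L^2}\le N^{-1}\|v\|_1$ for every $v\in\dot\hh^1$, we have $\|\tilde u(t)-P_N\tilde u(t)\|_{\mathbb L^2}\le N^{-1}\|\tilde u(t)\|_1$, and therefore
\begin{align*}
\sup_{t\in I}\Big(\ee\big[\|\tilde u(t)-P_N\tilde u(t)\|_{\mathbb L^2}^p\big]\Big)^{\frac1p}
\le N^{-1}\sup_{t\in I}\Big(\ee\big[\|\tilde u(t)\|_1^p\big]\Big)^{\frac1p}.
\end{align*}
Thus it suffices to prove $\sup_{t\in I}(\ee[\|\tilde u(t)\|_1^p])^{1/p}\le Ch^{H-1}$; this simultaneously yields the membership $\tilde u\in L^\infty([0,T];\mathbb L^p(\Omega;\dot\hh^1))$ and the claimed rate.

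Writing \eqref{mild-dis} as $\tilde u(t)=\omega(t)+\int_0^t\int_{\OO} G_{t-s}(\cdot,y)b(\tilde u(s,y))\,ds\,dy+\tilde S(t)$, I would bound the three terms in $\dot\hh^1$. For the SWE one has $\sqrt{\lambda_\alpha}\,|\phi_\alpha(t)|=|\sin(\sqrt{\lambda_\alpha}t)|\le1$ and $|\partial_t\phi_\alpha(t)|=|\cos(\sqrt{\lambda_\alpha}t)|\le1$, so expanding $\omega$ in the eigenbasis gives $\|\omega(t)\|_1\le C(\|u_0\|_1+\|v_0\|_{\mathbb L^2})$. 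For the deterministic convolution, the same bound $\sqrt{\lambda_\alpha}|\phi_\alpha(t-s)|\le1$ together with the Cauchy--Schwarz inequality in $s$ reduces its squared $\dot\hh^1$-norm to $C\,t\int_0^t\|b(\tilde u(s))\|_{\mathbb L^2}^2\,ds$, which by the Lipschitz continuity of $b$ and the uniform $\mathbb L^2$-moment bound for $\tilde u$ (established exactly as in Theorem \ref{wel}) is $O(1)$ in $\mathbb L^p(\Omega)$; crucially this step uses only the $\mathbb L^2$-norm of $\tilde u$, so there is no circularity. Since $h^{H-1}\ge1$ for $h\le1$ and $H<1$, both contributions are absorbed into $Ch^{H-1}$.

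The main obstacle is the stochastic convolution $\tilde S(t)$. Because $\tilde S(t)$ is a Gaussian random field, it suffices to estimate its second moment and then invoke Gaussian hypercontractivity to pass to general $p$. Using $\tilde S(t)=\int_I\int_{\OO} G^{m,n}_{t,s}(x,y)\,\xi(ds,dy)$ and projecting onto $\varphi_\alpha$, each coordinate $(\tilde S(t),\varphi_\alpha)$ is the stochastic integral of the kernel $g^\alpha_{t,s}(y)=\bar\phi^{\,i}_\alpha(t)\,\bar\varphi^{\,j}_\alpha$ on $I_i\times\OO_j$, where $\bar\phi^{\,i}_\alpha(t)=\frac1k\int_{I_i}\phi_\alpha(t-\tau)\,d\tau$ and $\bar\varphi^{\,j}_\alpha=\frac1h\int_{\OO_j}\varphi_\alpha(z)\,dz$. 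Applying the It\^o isometry \eqref{ito0} to $\ee[\|\tilde S(t)\|_1^2]=\sum_\alpha\lambda_\alpha\,\ee[(\tilde S(t),\varphi_\alpha)^2]$, the within-cell differences vanish since $g^\alpha_{t,s}$ is constant on each $\OO_j$, so the fractional term involves only the cross-cell sum $\sum_{j\neq l}|\bar\varphi^{\,j}_\alpha-\bar\varphi^{\,l}_\alpha|^2\int_{\OO_j}\int_{\OO_l}|y-z|^{2H-2}\,dy\,dz$. The decisive ingredients are then: (i) the explicit evaluation $\bar\varphi^{\,j}_\alpha=\frac{\sqrt2}{h\sqrt{\lambda_\alpha}}\big[\cos(\sqrt{\lambda_\alpha}x_j)-\cos(\sqrt{\lambda_\alpha}x_{j+1})\big]$, yielding the cell-averaging decay $|\bar\varphi^{\,j}_\alpha|\le C\min\{1,(h\sqrt{\lambda_\alpha})^{-1}\}$, which effectively truncates the frequency sum at $\sqrt{\lambda_\alpha}\sim h^{-1}$; (ii) the bound $\int_I|\bar\phi^{\,i(s)}_\alpha(t)|^2\,ds\le\int_0^t\phi_\alpha^2(t-s)\,ds\le C\lambda_\alpha^{-1}$ for the SWE, which cancels the extra factor $\lambda_\alpha$ from the $\dot\hh^1$-norm; and (iii) the singular-kernel inequality \eqref{dif}. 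Combining these, the fractional term is bounded by $C\,h^{2H-1}\sum_\alpha\min\{1,(h^2\lambda_\alpha)^{-1}\}$, and a direct evaluation gives $\sum_\alpha\min\{1,(h^2\lambda_\alpha)^{-1}\}\le Ch^{-1}$, whence the fractional term is $O(h^{2H-2})$. The boundary term in \eqref{ito0} is treated identically and is of the smaller order $O(h^{-1})$, dominated by $h^{2H-2}$ precisely because $H<1/2$.

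Collecting these bounds and taking square roots yields $\|\tilde S(t)\|_{\mathbb L^2(\Omega;\dot\hh^1)}\le Ch^{H-1}$ uniformly in $t$, and Gaussianity promotes this to $\mathbb L^p(\Omega)$; together with the $O(1)$ estimates for $\omega$ and the drift term this gives $\sup_{t\in I}(\ee[\|\tilde u(t)\|_1^p])^{1/p}\le Ch^{H-1}$, completing the proof. The technical heart is balancing the $\lambda_\alpha$-growth of the $\dot\hh^1$-norm against the frequency truncation induced by spatial averaging and the temporal decay $\int_0^t\phi_\alpha^2\lesssim\lambda_\alpha^{-1}$; the hypothesis $H<1/2$ is exactly what makes the fractional kernel term dominate and produce the sharp exponent $h^{H-1}$.
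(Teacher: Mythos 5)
Your proposal is correct, and its skeleton coincides with the paper's proof of Proposition \ref{reg}: the reduction $\|\tilde u(t)-P_N\tilde u(t)\|_{\mathbb L^2}\le N^{-1}\|\tilde u(t)\|_1$, the three-term splitting of \eqref{mild-dis} with the elementary SWE bounds $\sqrt{\lambda_\alpha}\,|\phi_\alpha|\le1$ and $|\phi_\alpha'|\le1$ for the homogeneous part, the Cauchy--Schwarz/Lipschitz treatment of the drift (your remark that this step needs only the $\mathbb L^2$-moment bound of $\tilde u$, so there is no circularity, is exactly the right point), Gaussianity plus the It\^o isometry \eqref{ito0} for $\tilde S$, the vanishing of the within-cell fractional term, and the use of \eqref{dif} together with $\int_0^t\phi_\alpha^2(t-s)\,ds\le C\lambda_\alpha^{-1}$ for the cross-cell term---these are precisely the paper's $F_1$, $F_2$, and $F_3=F_{31}+F_{32}+F_{33}$. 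The one place where you genuinely deviate is the closing frequency sum: you exploit the averaging decay $|\bar\varphi^{\,j}_\alpha|\le C\min\{1,(h\sqrt{\lambda_\alpha})^{-1}\}$ and evaluate $\sum_\alpha\min\{1,(h^2\lambda_\alpha)^{-1}\}\le Ch^{-1}$ directly, which makes the frequency cutoff at $\sqrt{\lambda_\alpha}\sim h^{-1}$---and hence the provenance of the exponent $H-1$---completely explicit; the paper instead uses the identity $\int_{\OO_j}\varphi_\alpha(z)\,dz=\lambda_\alpha^{-1/2}\big(\psi_\alpha(x_j)-\psi_\alpha(x_{j+1})\big)$ and closes the whole sum in one stroke via the $\psi_\alpha$-estimate \eqref{sin-sum} of Lemma \ref{lm-re}, namely $\sum_\alpha\lambda_\alpha^{-1}|\psi_\alpha(x_{j+1})-\psi_\alpha(x_j)|^2\le Ch$. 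The two devices are equivalent in strength: both give the cross-cell bound of order $h^{2H-1}\cdot h^{-1}=h^{2H-2}$ and the boundary-weight bound of order $h^{-1}$, which is dominated by $h^{2H-2}$ exactly when $H\le 1/2$; your truncation argument is self-contained and more transparent about the mechanism, while the paper's is shorter because Lemma \ref{lm-re} is already available and is reused elsewhere (e.g.\ in Theorem \ref{ww}).
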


\begin{proof} 
By the above property of $P_N$, we get
\begin{align*}
\|\tilde u (t)-P_N \tilde u (t)\|_{\mathbb L^2}
\le N^{-1}\|\tilde u (t)\|_1
\end{align*}
Thus to prove \eqref{u2} it suffices to show that
\begin{align}\label{u20}
\sup_{t\in I} \bigg(\ee\bigg[\|\tilde u (t)\|_1^p\bigg] \bigg)^\frac1 p
\le C h^{H-1}. 
\end{align}
The definition of the $\dot \hh ^1$-norm and Young's inequality yield
\begin{align}
& \ee\bigg[\|\tilde u(t)\|_1^p\bigg] \nonumber  \\
&\le C\ee\bigg[\bigg(\sum_{\alpha=1}^\infty \lambda_\alpha  
(\varphi_\alpha, \omega(t,\cdot))^2 \bigg)^\frac p2\bigg] \nonumber  \\
&\quad+C\ee \Bigg[\bigg( \sum_{\alpha=1}^\infty\lambda_\alpha
\bigg(\varphi_\alpha, \int_0^t\int_{\OO} G_{t-s}(\cdot,y)b( \tilde u (s,y))dsdy\bigg)^2 \bigg)^\frac p2\Bigg] \nonumber \\
 &\qquad  +C\ee \Bigg[\bigg( \sum_{\alpha=1}^\infty\lambda_\alpha
\bigg(\varphi_\alpha, \int_0^t\int_{\OO} G_{t-s}(\cdot,y)\tilde \xi(ds,dy)\bigg)^2 \bigg)^\frac p2 \Bigg] \nonumber  \\
& =: F_1(t)+F_2(t)+F_3(t).\label{u21}
\end{align}

For SWE, it is clear that 
$\sup\limits_{t\in I}|\phi_\alpha(t)|\le \lambda_\alpha^{-\frac12}$ and 
$\sup\limits_{t\in I}|\phi_\alpha'(t)| \le 1$ for any $\alpha\in \nn_+$.
Then
\begin{align}
& F_1(t)   \\
&=C\ee \Bigg[\bigg(\sum_{\alpha=1}^\infty\lambda_\alpha 
\bigg(\varphi_\alpha, \int_{\OO} G_t(\cdot,y)v_0(y)dy+\int_{\OO} \frac{\partial}{\partial t}G_t(\cdot,y) u_0(y)dy\bigg)^2 \bigg)^\frac p2\Bigg] \nonumber  \\
&\le C\ee\Bigg[\bigg(\sum_{\alpha=1}^\infty \lambda_\alpha \phi^2_\alpha(t) (\varphi_\alpha,v_0)^2\bigg)^\frac p2 \Bigg]
+C\ee\Bigg[\bigg(\sum_{\alpha=1}^\infty \lambda_\alpha |\phi'_\alpha(t)|^2 
(\varphi_\alpha, u_0)^2\bigg)^\frac p2 \Bigg] \nonumber  \\
&\le C\ee\Bigg[\bigg(\sum_{\alpha=1}^\infty (\varphi_\alpha,v_0)^2\bigg)^\frac p2 \Bigg]
+C\ee\Bigg[\bigg(\sum_{\alpha=1}^\infty \lambda_\alpha 
(\varphi_\alpha, u_0)^2\bigg)^\frac p2 \Bigg] \nonumber  \\
&\le C \bigg(\ee\bigg[\|u_0\|_1^p\bigg]
+\ee\bigg[\|v_0\|_{\mathbb L^2}^p\bigg]\bigg).\nonumber
\end{align}
Following a similar argument and combining the Lipschitz continuity of $b$ and the estimation \eqref{bon-swe}, we get
\begin{align}
F_2(t)
&=C\ee\Bigg[\bigg(\sum_{\alpha=1}^\infty\lambda_\alpha \int_0^t \phi^2_\alpha(t-s)
(\varphi_\alpha, b(\tilde u (s)))^2  ds\bigg)^\frac p2\Bigg] \nonumber  \\
&\le \ee\bigg[\bigg( \int_0^t \|b(\tilde u (s))\|_{\mathbb L^2}^2  ds\bigg)^\frac p2\bigg] \nonumber \\
&\le C \bigg(1+\ee\bigg[\|u_0\|_{\mathbb L^2}^p\bigg]
+\ee\bigg[\|v_0\|_{-1}^p\bigg]\bigg) . \label{f2}
\end{align}
For the last term $F_3(t)$, since $\int_0^t\int_{\OO} G_{t-s}(x,y)\tilde \xi(ds,dy)$ is Gaussian, we have
\begin{align*}
F_3(t)
&\le C\Bigg(\frac{1}{(kh)^2}\sum_{\alpha=1}^\infty\lambda_\alpha
\ee\bigg[\bigg|\sum_{i=0}^{m-1}\sum_{j=0}^{n-1} \int_{I_i}\int_{\OO_j}    \widehat{g}^{i,j}_{t,x}(\tau,z)\xi(ds,dy)\bigg|^2\bigg]\Bigg)^\frac p2, 
\end{align*}
where $\widehat{g}^{i,j}_{t,x}(s,y)
=\left(\int_{I_i} \phi_\alpha(t-\tau)d\tau\right) \left(\int_{\OO_j}\varphi_\alpha(z)dz\right)$.
Similar to the estimate bwtween $S$ and $\tilde S$ in Theorem \ref{ww}, we have
\begin{align*}
& F_3(t) \\
&\le\frac C{(kh)^p}\Bigg(\sum_{\alpha=1}^\infty\lambda_\alpha\sum_{i=0}^{m-1}\sum_{j=0}^{n-1}\int_{I_i}\int_{\OO_j}\int_{\OO_j}
\frac{ |\widehat{g}_{t,x}^{i,j}(s,y)-\widehat{g}_{t,x}^{i,j}(s,z)|^2} 
{|y-z|^{2-2H}} dydzds\Bigg)^\frac p2    \nonumber   \\
&\ +\frac C{(kh)^p} \Bigg(\sum_{\alpha=1}^\infty\lambda_\alpha\sum_{i=0}^{m-1}\sum_{j=0}^{n-1}\int_{I_i}\int_{\OO_j} |\widehat{g}_{t,x}^{i,j}(s,y)|^2 
\Big( y^{2H-1}+(1-y)^{2H-1}\Big)dsdy \Bigg)^\frac p2   \nonumber   \\
&\quad+\frac C{(kh)^p} \Bigg(\sum_{\alpha=1}^\infty\lambda_\alpha\sum_{i=0}^{m-1}\sum_{j\neq l}\int_{I_i}\int_{\OO_j}\int_{\OO_l}  
\frac{|\widehat{g}_{t,x}^{i,j}(s,y)|\cdot |\widehat{g}_{t,x}^{i,k}(s,z)|}
{|y-z|^{2-2H}}  dydzds \Bigg)^\frac p2   \nonumber\\
&=:F_{31}(t)+F_{32}(t)+F_{33}(t).
\end{align*}

The first term $F_{31}(t)$ vanishes:
\begin{align*}
F_{31}(t)=0.
\end{align*}

It follows from the inequalities \eqref{phi} and \eqref{sin-sum} that
\begin{align*}
F_{32}(t)
&\le \frac C{h^p}\Bigg[\sum_{j=0}^{n-1}\sum_{\alpha=1}^\infty\lambda_\alpha 
\bigg( \int_0^t \phi_\alpha^2(t-s)ds \bigg)
\bigg(\int_{\OO_j}\varphi_\alpha(z)dz \bigg)^2   \\
&\qquad\qquad \bigg(\int_{\OO_j} \Big( y^{2H-1}+(1-y)^{2H-1} \Big)dy\bigg) \Bigg]^\frac p2   \\
&\le \frac C{h^p} \Bigg[\sum_{j=0}^{n-1} 
\bigg(\sum_{\alpha=1}^\infty \frac{[\psi_\alpha(x_{j+1})-\psi_\alpha(x_j)]^2}{\lambda_\alpha} \bigg)   \\
&\qquad \qquad \bigg(\int_{\OO_j} \Big( y^{2H-1}+(1-y)^{2H-1} \Big)dy\bigg)\Bigg]^\frac p2 
\le C h^{-\frac p2}.
\end{align*}

By \eqref{sin-sum} in Lemma \ref{lm-re} and the estimate \eqref{dif}, we have
\begin{align*}
F_{33}(t)
&\le\frac C{h^p} \Bigg(\sum_{\alpha=1}^\infty\lambda_\alpha  
\bigg( \int_0^t \phi_\alpha^2(t-s)ds \bigg)     \\
&\qquad \qquad\qquad \Bigg[\sum_{j\neq l}\bigg(\bigg(\int_{\OO_j}\varphi_\alpha(r)dr\bigg)^2+\bigg(\int_{\OO_l}\varphi_\alpha(r)dr\bigg)^2\bigg)  \\
&\qquad \qquad\qquad \qquad \qquad 
\bigg(\int_{\OO_j}\int_{\OO_l} |y-z|^{2H-2}  dydz\bigg) \Bigg] \Bigg)^\frac p2   \\
&\le \frac C{h^p}\Bigg[ \sum_{j\neq l}
 \bigg(\int_{\OO_j}\int_{\OO_l} |y-z|^{2H-2}  dydz\bigg) \\
 &\qquad  \bigg(\sum_{\alpha=1}^\infty 
\frac{|\psi_\alpha(x_{j+1})-\psi_\alpha(x_j)|^2}{\lambda_\alpha}
+\sum_{\alpha=1}^\infty 
\frac{|\psi_\alpha(x_{l+1})-\psi_\alpha(x_l)|^2}{\lambda_\alpha}\bigg) 
\Bigg]^\frac p2 \\
&\le C h^{(H-1)p}.
\end{align*}
Thus we obtain
\begin{align}\label{f33}
F_3(t)\le C h^{(H-1)p}.
\end{align}
Combining the estimations \eqref{u21}-\eqref{f33}, we conclude \eqref{u20}.
\end{proof}

\begin{tm}\label{spe-ord0}
Let $p\ge 2$.
Assume that $u_0\in \mathbb L^p(\Omega; \dot{\hh}^1)$ and $v_0\in \mathbb L^p(\Omega; \mathbb L^2)$. 
Then there exists a constant $C=C(p,T,H,u_0,v_0)$ such that
\begin{align}\label{spe-ord00}
\sup_{t\in I}\bigg(\ee\bigg[\| \tilde u (t)- \tilde u _N(t)\|_{\mathbb L^2}^p\bigg]\bigg)^\frac1p
\le C N^{-1} h^{H-1}.
\end{align}
\end{tm}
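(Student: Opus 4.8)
The plan is to exploit the splitting \eqref{div} and estimate the two pieces separately. The first piece, $\tilde u(t)-P_N\tilde u(t)$, is already controlled by Proposition \ref{reg}, which furnishes the bound $C N^{-1} h^{H-1}$ in $\mathbb L^p(\Omega;\mathbb L^2)$. Hence the entire problem reduces to showing that the second piece $\theta_N(t):=P_N\tilde u(t)-\tilde u_N(t)$ obeys the same order of estimate.

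First I would write both $P_N\tilde u$ and $\tilde u_N$ in mild (Duhamel) form against the orthonormal basis $\{\varphi_\alpha\}$. Projecting the mild equation \eqref{mild-dis} onto $V_N$ and using the commutation relation $\Delta P_N=P_N\Delta$ — equivalently, the fact that $\{\varphi_\alpha\}$ simultaneously diagonalizes the wave propagators $\cos(\sqrt{\lambda_\alpha}\,\cdot)$ and $\tfrac{\sin(\sqrt{\lambda_\alpha}\,\cdot)}{\sqrt{\lambda_\alpha}}$ — one sees that, for every $\beta\le N$, the coefficient $(P_N\tilde u(t),\varphi_\beta)$ and the matching coefficient of $\tilde u_N(t)$ satisfy mild formulas with \emph{identical} initial-data parts (built from $(u_0,\varphi_\beta)$ and $(v_0,\varphi_\beta)$) and identical stochastic-convolution parts (built from the same diagonal propagators applied to $(\tilde\xi(s),\varphi_\beta)$). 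Consequently every contribution cancels except the drift, leaving
\[
\theta_N(t)=\int_0^t \mathcal S_N(t-s)\,P_N\big(b(\tilde u(s))-b(\tilde u_N(s))\big)\,ds,
\]
where $\mathcal S_N(r)$ denotes the sine propagator $g\mapsto\sum_{\beta=1}^N \frac{\sin(\sqrt{\lambda_\beta}\,r)}{\sqrt{\lambda_\beta}}(g,\varphi_\beta)\varphi_\beta$ on $V_N$.

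Next I would close the estimate by a Gronwall argument. Since $\big|\tfrac{\sin(\sqrt{\lambda_\beta}\,r)}{\sqrt{\lambda_\beta}}\big|\le \lambda_1^{-1/2}$ for all $\beta$ and $r$, the operator $\mathcal S_N(t-s)$ has $\mathbb L^2\to\mathbb L^2$ norm bounded by a constant uniformly in $t,s,N$; combining this with $\|P_N\|=1$ and the Lipschitz continuity of $b$ gives $\|\theta_N(t)\|_{\mathbb L^2}\le C\int_0^t\|\tilde u(s)-\tilde u_N(s)\|_{\mathbb L^2}\,ds$. Using the triangle inequality $\|\tilde u(s)-\tilde u_N(s)\|_{\mathbb L^2}\le\|\tilde u(s)-P_N\tilde u(s)\|_{\mathbb L^2}+\|\theta_N(s)\|_{\mathbb L^2}$, taking $\mathbb L^p(\Omega)$-norms, and inserting the projection-error bound from Proposition \ref{reg}, I obtain
\[
\|\theta_N(t)\|_{\mathbb L^p(\Omega;\mathbb L^2)}\le C N^{-1}h^{H-1}+C\int_0^t\|\theta_N(s)\|_{\mathbb L^p(\Omega;\mathbb L^2)}\,ds,
\]
so that Gronwall's inequality yields $\sup_{t\in I}\|\theta_N(t)\|_{\mathbb L^p(\Omega;\mathbb L^2)}\le C N^{-1}h^{H-1}$. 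Feeding this and Proposition \ref{reg} back into \eqref{div} produces \eqref{spe-ord00}.

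I expect the only genuinely delicate point to be the exact termwise cancellation in the second step: one must verify carefully that both the initial-data contributions and, more importantly, the Wong--Zakai stochastic convolutions of $P_N\tilde u$ and of $\tilde u_N$ agree coefficient by coefficient. This is precisely where the spectral method is more convenient than the finite element method of Theorem \ref{fem-ord}: because $P_N$ commutes with $\Delta$, no analogue of the operator $F_h$ survives, and hence no additional smoothing or regularity estimate for $\tilde S$ is required beyond the $\dot\hh^1$-bound \eqref{u20} already supplied by Proposition \ref{reg}. Once the cancellation is secured, the remaining propagator bound, Lipschitz estimate, and Gronwall step are routine.
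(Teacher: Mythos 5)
Your proposal is correct, but it proves the bound for $P_N\tilde u-\tilde u_N$ by a genuinely different route than the paper. The paper keeps the weak (variational) formulation: it projects the weak form of \eqref{spde-dis} onto $V_N$ using $\Delta P_N=P_N\Delta$, subtracts \eqref{spe}, tests with $v=\partial_t\big(P_N\tilde u-\tilde u_N\big)$, and obtains the energy identity
\begin{align*}
\|\partial_t (P_N\tilde u(t)-\tilde u_N(t))\|_{\mathbb L^2}^2
+\|\nabla (P_N\tilde u(t)-\tilde u_N(t))\|_{\mathbb L^2}^2
\le C\int_0^t \|\partial_t (P_N\tilde u(s)-\tilde u_N(s))\|_{\mathbb L^2}^2\,ds
+\int_0^t \|\tilde u(s)-\tilde u_N(s)\|_{\mathbb L^2}^2\,ds,
\end{align*}
after which Gronwall and Poincar\'e's inequality give $\|P_N\tilde u(t)-\tilde u_N(t)\|_{\mathbb L^2}^2\le C\int_0^t\|\tilde u(s)-\tilde u_N(s)\|_{\mathbb L^2}^2\,ds$, and the conclusion follows from \eqref{div}, \eqref{u2} and a second Gronwall step. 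You instead pass to the mild (Duhamel) form: since $V_N$ is spanned by eigenfunctions of $-\Delta$, the Fourier coefficients of $P_N\tilde u$ (read off from the Green's function representation \eqref{gre} and \eqref{mild-dis}) and of $\tilde u_N$ (obtained by variation of constants for the Galerkin ODE system \eqref{spe}) have identical initial-data and stochastic-convolution parts, so only the drift difference survives; the uniform bound $|\sin(\sqrt{\lambda_\beta}\,r)/\sqrt{\lambda_\beta}|\le \lambda_1^{-1/2}$, the Lipschitz continuity of $b$, Minkowski's integral inequality in $\mathbb L^p(\Omega;\mathbb L^2)$, Proposition \ref{reg}, and Gronwall finish the argument. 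Both proofs hinge on the same two ingredients (the commutation $\Delta P_N=P_N\Delta$ and the projection-error bound of Proposition \ref{reg}), and both are valid. Your route buys a cleaner argument: the exact termwise cancellation replaces the energy identity, so you never need to manipulate $\partial_t(P_N\tilde u-\tilde u_N)$ as a test function, nor invoke Poincar\'e's inequality, and the moment estimates enter transparently through Minkowski. The paper's energy method, on the other hand, is the standard Galerkin argument that would survive even if the approximation space did not diagonalize the wave propagator (where your cancellation would fail), and it yields control of $\|\partial_t(P_N\tilde u-\tilde u_N)\|_{\mathbb L^2}$ and $\|\nabla(P_N\tilde u-\tilde u_N)\|_{\mathbb L^2}$ as a by-product.
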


\begin{proof} The weak formulation of \eqref{spde-dis} reads for any $v\in V_n$,
\begin{align*}
(\partial_t  \tilde u (t),v)=(v_0,v)+\int_0^t ( \tilde u (s), \Delta v)ds+\int_0^t (b( \tilde u (s))+\tilde \xi(s), v)ds.
\end{align*}
Since $\Delta P_N u=P_N \Delta u$ for any $u\in V$, we get for any $v\in V_N$,
\begin{align}\label{wea-pro}
(P_N\partial_t  \tilde u (t),v)
&=(P_Nv_0,v)+\int_0^t (P_N \tilde u (s), \Delta v)ds \nonumber  \\
&\quad +\int_0^t (b( \tilde u (s))+\tilde \xi(s), v)ds.
\end{align}
Set $v(t)=\partial_t (P_N \tilde u (t)-  \tilde u _N(t))$. Then $v(t)$ is an element of $V_N$ for any $t>0$. Subtracting \eqref{wea-pro} from \eqref{spe}, we obtain
\begin{align*}
&\|\partial_t (P_N \tilde u (t)-  \tilde u _N(t))\|_{\mathbb L^2}^2 \\
&=\int_0^t (P_N \tilde u (s)-  \tilde u _N(s), \Delta [\partial_t (P_N \tilde u (s)-  \tilde u _N(s))])ds  \\
&\quad+\int_0^t (b( \tilde u (s))-b( \tilde u _N(s)), \partial_t (P_N \tilde u (s)-  \tilde u _N(s)))ds   \\
&=-\|\nabla (P_N \tilde u (t)-\tilde u _N(t))\|_{\mathbb L^2}^2 \\
&\quad +\int_0^t (b( \tilde u (s))-b( \tilde u _N(s)), \partial_t (P_N \tilde u (s)-  \tilde u _N(s)))ds.
\end{align*}
By Cauchy-Schawarz inequality and the Lipschitz continuity of $b$, we have
\begin{align*}
&\|\partial_t (P_N \tilde u (t)-  \tilde u _N(t))\|_{\mathbb L^2}^2
+\|\nabla (P_N \tilde u (t)-  \tilde u _N(t))\|_{\mathbb L^2}^2   \\
&\le C \int_0^t \|\partial_t (P_N \tilde u (s)-  \tilde u _N(s))\|_{\mathbb L^2}^2 ds
+\int_0^t \| \tilde u (s)- \tilde u _N(s)\|_{\mathbb L^2}^2 ds,
\end{align*}
which in turn, by the classical Gronwall's inequality, yields
\begin{align*}
\|\nabla (P_N \tilde u (t)-  \tilde u _N(t))\|_{\mathbb L^2}^2
\le C \int_0^t \| \tilde u (s)- \tilde u _N(s)\|_{\mathbb L^2}^2 ds.
\end{align*}
By Poincar\'{e}'s inequality we have that 
\begin{align}\label{spe2}
\|P_N \tilde u (t)-  \tilde u _N(t)\|_{\mathbb L^2}^2
& \le C \|\nabla (P_N \tilde u (t)-  \tilde u _N(t))\|_{\mathbb L^2}^2 \nonumber \\
& \le C \int_0^t \| \tilde u (s)- \tilde u _N(s)\|_{\mathbb L^2}^2 ds.
\end{align}
Combining \eqref{div}, \eqref{u2} and \eqref{spe2}, we conclude \eqref{spe-ord00} by Gronwall's inequality.
\end{proof}

\begin{rk}
For the space-time white noise, i.e., $H=1/2$, the above convergence result improves slightly the corresponding result of \cite[Theorem 4]{CY07}, where the authors proved that 
\begin{align*}
\sup_{t\in I} \ee\bigg[\| \tilde u (t)- \tilde u _N(t)\|_{\mathbb L^2}^2\bigg] 
\le C h |\ln h| 
\end{align*}
provided $N=h^{-1}$ and $u_0\in \mathbb L^2(\Omega; \dot{\hh}^{\beta+1})$ and $v_0\in \mathbb L^2(\Omega; \dot{\hh}^\beta)$ for some $\beta>1/2$.
\end{rk}

\bibliographystyle{amsalpha}
\bibliography{bib.bib}
\end{document}